\newtheorem{thm}{Theorem}[section]
\newtheorem{lemma}[thm]{Lemma}
\newcommand{\Z}{{\mathbb Z}}
\newcommand{\Com}{{\mathbb C}}
\newcommand{\cI}{{\mathrm{c\text{--} Ind}}}
\DeclareMathOperator{\Spm}{\mathrm{m-Spec}}
\let\c@equation\c@thm
\numberwithin{equation}{section}
\title{Generic smooth representations}
\author{Alexandre Pyvovarov}
\date{\today}
\begin{document}

\maketitle

\begin{abstract}
Let $F$ be a non-archimedean local field. In this paper we explore genericity of irreducible smooth representations of $GL_n(F)$ by restriction to a maximal compact subgroup $K$ of $GL_n(F)$. Let $(J, \lambda)$ be a Bushnell--Kutzko type for a Bernstein component $\Omega$. The work of Schneider--Zink gives an irreducible $K$-representation $\sigma_{min}(\lambda)$, which appears with multiplicity one in $\mathrm{Ind}_J^K \lambda$. Let $\pi$ be an irreducible smooth representation of $GL_n(F)$ in $\Omega$. We will prove that $\pi$ is generic if and only if $\sigma_{min}(\lambda)$ is contained in $\pi$ with multiplicity one.
\end{abstract}

\tableofcontents

\section{Introduction}

We are here concerned with the problem of understanding the genericity of irreducible smooth representations of a general linear group over a $p$-adic field. 

Let $G$ be a reductive $p$-adic group. Recall that a smooth irreducible representation $\pi$ of $G$ is called generic if $\pi$ appears in $\mathrm{Ind}_U^G \psi$ (i.e. admits a Whittaker model), where $\mathrm{Ind}$ denotes induction and $\psi$ is a nondegenerate character of a maximal unipotent subgroup $U$ of $G$.

We will start by recalling a few facts about the category of smooth representations. Let $C$ be an algebraically closed field of characteristic zero. Let  $\mathcal{R}(G)$ be the category of all smooth  $C$-representations of $G$. The Bernstein decomposition (\cite{MR771671}) expresses the category of smooth $C$-valued representations of  $G$ as the product of certain indecomposable full subcategories, called Bernstein components. Those components are parametrized by the inertial classes, whose definition we now recall. Consider the set of pairs $(M, \rho)$, with $M$ a Levi subgroup of $G$ and  $\rho$ an irreducible supercuspidal representation of $M$. We say that two pairs $(M_1, \rho_1)$ and $(M_2, \rho_2)$ are inertially equivalent if and only if there are $g \in G$ and an unramified character $\chi$ of $M_2$ such that $M_2=M_1^{g}$ and $\rho_2 \simeq \rho_1^g \otimes\chi$, where $M_1^g:=g^{-1}M_1g$ and $\rho_1^g(x) = \rho_1(gxg^{-1})$, for $x \in M_1^g$. The equivalence class of $(M, \rho)$ will be denoted by $[M,\rho]_{G}$, and is called \textit{inertial class}. The set of inertial classes will be denoted by $\mathcal{B}(G)$.

We denote by $i_{P}^{G} : \mathcal{R}(M) \longrightarrow \mathcal{R}(G)$ the normalized parabolic induction functor, where $P=MN$ is a parabolic subgroup of $G$ with Levi subgroup $M$. Let  $\Omega:=[M,\rho]_{G}$ be an inertial equivalence class, where $\rho$ is a supercuspidal representation of $M$. To $\Omega$ we may associate a full subcategory $\mathcal{R}^{\Omega}(G)$ of $\mathcal{R}(G)$, such that the representation $(\pi,V)$ is an object of $\mathcal{R}^{\Omega}(G)$ if and only if every irreducible $G$-subquotient $\pi_{0}$ of $\pi$ appears as a composition factor of $i_{P}^{G}(\rho \otimes \omega)$ for $\omega$ some unramified character of $M$ and $P$ some parabolic subgroup of $G$ with Levi factor $M$. The category $\mathcal{R}^{\Omega}(G)$ is called a Bernstein component of $\mathcal{R}(G)$. According to \cite{MR771671}, the Bernstein decomposition is written as, $\mathcal{R}(G) = \prod_{\Omega \in \mathcal{B}(G)} \mathcal{R}^{\Omega}(G)$. It follows that if we want to understand the category $\mathcal{R}(G)$, it is enough to restrict our attention to the Bernstein components. This can be done via the theory of types. This theory allows us to parametrize all the irreducible representations of $G$ up to inertial equivalence using irreducible representations of compact open subgroups of $G$. Let $J$ be a compact open subgroup of $G$ and let $\lambda$ be an irreducible representation of $J$. We say that $(J, \lambda)$ is an $\Omega$-type, if for $(\pi,V)$ a representation of $G$, the representation $(\pi,V)$ is an object of $\mathcal{R}^{\Omega}(G))$ if and only if $V$ is generated by its $\lambda$-isotypical space $V^{\lambda}$ as a $G$-representation.

Let $F$ be a local non-archimedean field. For $G=GL_n(F)$, types can be constructed (cf. \cite{MR1204652}, \cite{MR1643417} and \cite{MR1711578}) for every Bernstein component. The simplest example of a type is $(I, 1)$, where $I$ is the standard Iwahori subgroup of $G$ and $1$ is the trivial representation. In this case $\Omega = [T,1]_G$, where $T$ is the subgroup of diagonal matrices and $1$ denotes the trivial representation of $T$. We will refer to this example as the Iwahori case.

Fix $K$ a maximal compact subgroup of $G=GL_n(F)$. Given a Bushnell--Kutzko type $(J, \lambda)$ with $J$ contained in $K$, in \cite[section 6]{MR1728541} (just above Proposition~2) the authors define irreducible $K$-representations $\sigma_{\mathcal{P}}(\lambda)$, where $\mathcal{P}$ belongs to some partially ordered set (cf. \cite[section 2]{MR1728541}). One has the decomposition :
\begin{equation}\label{decomp}
\mathrm{Ind}_{J}^{K} \lambda = \bigoplus_{\mathcal{P}} \sigma_{\mathcal{P}}(\lambda)^{\oplus m_{\mathcal{P},\lambda}},
\end{equation}
\noindent where the summation runs over the same partially ordered set as above. The integers $m_{\mathcal{P},\lambda}$ are finite and we call $m_{\mathcal{P},\lambda}$ the multiplicity of $\sigma_{\mathcal{P}}(\lambda)$. Let $\mathcal{P}_{max}$ be the maximal elements and let $\mathcal{P}_{min}$ the minimal one. Define $\sigma_{max}(\lambda):=\sigma_{\mathcal{P}_{max}}(\lambda)$ and $\sigma_{min}(\lambda):=\sigma_{\mathcal{P}_{min}}(\lambda)$. Both $K$-representations $\sigma_{max}(\lambda)$ and $\sigma_{min}(\lambda)$ occur in $\mathrm{Ind}_{J}^{K} \lambda$ with multiplicity 1. In the Iwahori case those representations have a very simple description. Indeed, $\sigma_{min}(\lambda)$ is the inflation of the Steinberg representation of $GL_n(k_F)$ to $K$ and $\sigma_{max}(\lambda)$ is the trivial representation.

Having introduced the main notation of this paper we may now state our main theorem:

\begin{thm}\label{p}
	Let $\pi$ be an absolutely irreducible representation in the Bernstein component $\Omega$ and let $(J,\lambda)$ an $\Omega$-type. Then 
	$$
	\dim_C \mathrm{Hom}(\sigma_{min}(\lambda), \pi) = 
	\begin{cases}
	1\ \text{if}\ \pi \text{is a generic object of }\ \mathcal{R}^{\Omega}(G),\\
	0\ \text{otherwise.}
	\end{cases}
	$$
\end{thm}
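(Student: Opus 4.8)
The plan is to reduce the general case to the Iwahori case by a theory-of-types argument, and then handle the Iwahori case directly using known facts about generic representations with Iwahori-fixed vectors.

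First I would recall the Schneider--Zink construction more carefully. A Bushnell--Kutzko type $(J,\lambda)$ for $GL_n(F)$ has a rich structure: there is a simple type built from a maximal type $\lambda_M$ on a Levi $M$, and the relevant Hecke algebra $\mathcal{H}(G,\lambda)$ is isomorphic to an affine Hecke algebra of type $A$ (a tensor product of such, in general), by the work of Bushnell--Kutzko. The key point I would use is that the functor $V \mapsto V^\lambda = \mathrm{Hom}_J(\lambda, V)$ is an equivalence of categories between $\mathcal{R}^\Omega(G)$ and the category of modules over $\mathcal{H}(G,\lambda)$, and that under this equivalence genericity has a known characterization: an irreducible object $\pi \in \mathcal{R}^\Omega(G)$ is generic if and only if $V^\lambda$ is the unique (up to isomorphism) irreducible module on which the ``sign'' or ``Steinberg-type'' character of the finite Hecke subalgebra appears, or more precisely that the relevant $\mathcal{H}(G,\lambda)$-module is the Whittaker-generic one. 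In parallel, I would use that $\sigma_{min}(\lambda)$ is characterized inside $\mathrm{Ind}_J^K\lambda$ by the property (established in Schneider--Zink, \cite{MR1728541}) that $\mathrm{Hom}_K(\sigma_{min}(\lambda), \pi)$ computes, via Frobenius reciprocity and the isomorphism $\mathrm{Hom}_K(\mathrm{Ind}_J^K\lambda,\pi) \cong V^\lambda$, the multiplicity space of a specific character (the minimal one) of the finite-type Hecke algebra $\mathcal{H}(K,\lambda) \subset \mathcal{H}(G,\lambda)$ acting on $V^\lambda$.

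The core computation is then: for an irreducible $\mathcal{H}(G,\lambda)$-module $V^\lambda$, determine $\dim \mathrm{Hom}_{\mathcal{H}(K,\lambda)}(\chi_{min}, V^\lambda)$, where $\chi_{min}$ is the character of the finite Hecke algebra corresponding to $\sigma_{min}$. Because $\mathcal{H}(G,\lambda)$ is (a tensor product of) extended affine Hecke algebras of type $A$, and $\mathcal{H}(K,\lambda)$ is the corresponding finite Hecke algebra, this is a question about restriction of simple affine Hecke algebra modules to the finite Hecke subalgebra and picking out the sign character. I would invoke the Zelevinsky / Kato / Rogawski classification: simple modules of the affine Hecke algebra of $GL$ are indexed by multisegments, the sign character of the finite Hecke algebra appears with multiplicity at most one in any simple module, and it appears (with multiplicity exactly one) precisely in the modules corresponding to the ``generic'' point — which on the Galois/Langlands side is exactly the segment data of a generic representation. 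This matches the known fact that a generic irreducible representation of $GL_n$ in a given Bernstein component is unique up to the action of unramified twists that fix the component only when... — more carefully, within a single component there can be several generic constituents, but each carries the sign character with multiplicity one and each non-generic one carries it with multiplicity zero; this is precisely the statement I need. I would cite this as the key input (from the representation theory of affine Hecke algebras of type $A$ and its compatibility with the local Langlands / Zelevinsky correspondence, together with Bushnell--Kutzko's identification of $\mathcal{H}(G,\lambda)$ and the preservation of genericity under the type-theoretic equivalence, which is due to Bushnell--Henniart and Chan--Savin).

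The main obstacle I expect is the bookkeeping that makes the finite Hecke subalgebra $\mathcal{H}(K,\lambda)$ and its distinguished character $\chi_{min}$ match up correctly with the sign character under the Bushnell--Kutzko isomorphism, across the general (non-maximal-simple, non-homogeneous) case where $\Omega$ corresponds to a product $M = \prod GL_{n_i}(F_i)^{r_i}$ and the Hecke algebra is a genuine tensor product of affine Hecke algebras with unequal parameters. In particular I must check that the Schneider--Zink partial order has $\mathcal{P}_{min}$ sent to the sign character on each tensor factor (in the Iwahori case this is exactly the statement that $\sigma_{min}$ is the Steinberg inflation, which corresponds to the sign character of the finite Iwahori--Hecke algebra, and that the Steinberg-type constituent is the one selected by Whittaker data), and that genericity is detected factor-by-factor compatibly with parabolic induction from $M$. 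Once that compatibility is in place, the theorem follows by combining the multiplicity-one statement for the sign character on affine Hecke modules of type $A$ with the type-theoretic translation of genericity. To keep the argument self-contained I would first prove the Iwahori case in detail (where $\sigma_{min}$ is Steinberg-inflated and genericity of an Iwahori-spherical representation is governed by the well-understood Kazhdan--Lusztig / Borel picture), and then bootstrap to the general case via the equivalence of categories and the tensor-product decomposition of $\mathcal{H}(G,\lambda)$.
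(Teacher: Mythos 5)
Your high-level plan — equivalence of categories onto (a tensor product of) affine Hecke algebras of type $A$, identification of $\sigma_{min}(\lambda)$ with the sign character of the finite Hecke subalgebra, and reduction to the Iwahori case where genericity of Iwahori-spherical representations is understood — is exactly the route the paper takes for the nonvanishing part (Theorem~\ref{1.18}): the support-preserving Bushnell--Kutzko isomorphism $\Phi$ gives an equivalence $H:\mathcal{R}_\lambda(G)\to\mathcal{R}_1(G_L)$ commuting with compact induction from $K$, one checks $H_K(\sigma_{min}(\lambda))=\mathrm{St}$ and that $H$ preserves segment data, and the Iwahori case is quoted from \cite{MR1915088}; the semi-simple case is then handled factor-by-factor using the tensor decomposition of $\mathcal{H}(\overline{M},\lambda_M)$ and Mackey/Frobenius/coinvariants. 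However, your treatment of the multiplicity-one statement is genuinely different from the paper's. You propose to invoke, as a black box, that the sign character of the finite Hecke algebra occurs with multiplicity at most one in every simple module of an affine Hecke algebra of type $A$; this is a true and citable fact, and combined with the transfer it would close the argument. The paper instead proves $\dim\mathrm{Hom}_K(\sigma_{min}(\lambda),\pi)\le 1$ by a purely structural argument (Lemma~\ref{4.34}): since $\sigma_{min}(\lambda)$ occurs in $\mathrm{Ind}_J^K\lambda$ with multiplicity one, $\mathrm{End}_G(\cI_K^G\sigma_{min}(\lambda))\simeq\mathfrak{Z}_\Omega$ (quoting \cite{Pyv1}), so that $\mathrm{Hom}_K(\sigma_{min}(\lambda),\cI_K^G\sigma_{min}(\lambda)\otimes_{\mathfrak{Z}_\Omega}\kappa(x))\simeq\kappa(x)$ surjects onto $\mathrm{Hom}_K(\sigma_{min}(\lambda),\pi)$. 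The paper's route buys a self-contained multiplicity bound independent of the Hecke-module classification; your route buys brevity at the cost of an external reference. Two cautions for your write-up: (i) the compatibilities you label ``bookkeeping'' (that $H$ is compatible with unramified twists, preserves segments, and sends $\sigma_{min}$ to $\mathrm{St}$) are precisely the technical core of the paper's Section~\ref{M.8} and cannot be waved away — in particular you must verify that the support-preserving isomorphism $\Phi$ actually restricts to an isomorphism $\mathcal{H}(K,\lambda)\cong\mathcal{H}(K_L,I_L,1)$ matching $\sigma_{min}$ with Steinberg, which requires the Schneider--Zink equivalence $\mathrm{Ind}_{J_{max}}^K(\kappa_{max}\otimes\cdot)$ and the lemmas on $\overline{H}_e$; and (ii) in the semi-simple case you should justify that a Langlands quotient $i_P^G(\pi_1\otimes\cdots\otimes\pi_s)$ is generic if and only if each $\pi_i$ is generic, which uses that the supercuspidal supports of the $\pi_i$ are pairwise inertially disjoint so no inter-block linking can occur.
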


Theorem \ref{p} shows that the representation $\sigma_{min}(\lambda)$ has a very special role. One can wonder about other $\sigma_{\mathcal{P}}(\lambda)$'s. There is a recent result by Jack Shotton in that direction. He proves \cite[Thm.3.7]{MR3769675} that by modifying the proof of  \cite[Proposition 2 Section 6]{MR1728541} and \cite[Proposition 6.5.3]{MR2656025} in the tempered case, one gets the same result in the generic case. In the author's thesis the result \cite[Thm.3.7]{MR3769675} was proven independently but with a different method. First using the theory of types of Bushnell--Kutzko, we reduce the statement to the Iwahori case. Then, in the Iwahori case, we use the results of Rogawski \cite{MR782228} on modules over Iwahori--Hecke algebra. In this case the proof relies on some easy combinatorics on partitions.

The multiplicity one statement can fail for other $\sigma_{\mathcal{P}}(\lambda)$'s. For example, consider the Iwahori case with $n=3$, i.e. $G=GL_3(F)$. Take $\pi=i_B^G(1 \otimes \chi_1 \otimes \chi_2)$, where $B$ is the subgroup of $G$ of upper triangular matrices, $1$ the trivial character and $\chi_1$, $\chi_2$ unramified characters such that $\chi_1.\chi_2^{-1} \neq |.|^{\pm 1}$. Then, writing $\sigma_{2,1}$ for the summand of $\mathrm{Ind}_I^K 1$ corresponding to the partition $(2,1)$ (see section 2), one can easily verify that $\dim \mathrm{Hom}_K(\sigma_{2,1}, \pi)=2$.

Let us say a few words about the proof of Theorem \ref{p}. First we use one of the main results of  \cite{MR1711578}, which asserts that the Hecke algebra $\mathcal{H}(G,\lambda)$ is naturally isomorphic to a tensor product of affine Hecke algebras of type A. Moreover it is shown in  \cite{MR1204652} that any Hecke algebra of a simple type is isomorphic to an affine Hecke algebra of type A. In this manner we can reduce the statement about irreducible representations of general type to the Iwahori case. It was pointed out to us, recently, by Peter Schneider that the Iwahori case was already treated by \cite{MR1915088}. This allowed us to simplify a little the original proof in the author's thesis.

Finally let us observe that to the best of our knowledge Theorem \ref{p} and \cite[Thm.3.7]{MR3769675} do not have an analogue for all reductive groups, because the crucial ingredient in the proofs is the tensor product decomposition of the Hecke algebra $\mathcal{H}(G,\lambda)$ and the existence of types, proven by Bushnell--Kutzko in \cite{MR1711578}. Indeed results of \cite{MR1204652}, \cite{MR1643417} and \cite{MR1711578} allow us to transfer the general situation to the Iwahori case, where the proofs are simpler. However we believe that those results should generalize easily to reductive groups with $A_n$ root system. It would be interesting to investigate the case of other reductive groups.

\subsection*{Notation}

For an arbitrary local non-archimedean field $L$, let $\mathcal{O}_L$ be its ring of integers and $k_{L}$ the residue field. We also choose a uniformizer $\varpi_L \in \mathcal{O}_L$. From now on fix $F$ a local non-archimedean field and $G=GL_n(F)$.

Recall that all the representations have their coefficients in an algebraically closed field $C$ of characteristic zero. Assume that $C$ has the same cardinality as the complex numbers $\Com$. Fix an isomorphism $\iota : C \rightarrow \Com$. Let $\tilde{G}$ be some $p$-adic group. A character $\chi: \tilde{G} \rightarrow C$ is defined by $\chi = \iota^{-1}(\iota \circ \chi)$, where $\iota \circ \chi$ is a character in a usual sense.

We are given an inertial class $\Omega=[M,\rho]_{G}$, where $\rho$ is a supercuspidal representation of $M$ and an $\Omega$-type $(J, \lambda)$ with $J \subset K$ a compact open subgroup of $G$. Write $\mathfrak{Z}_{\Omega}$ for the centre of the category $\mathcal{R}^{\Omega}(G)$. Recall that the centre of a category is the ring of endomorphisms of the identity functor. For example the centre of the category $\mathcal{H}(G,\lambda)\text{-Mod}$ is $Z(\mathcal{H}(G,\lambda))$, where $Z(\mathcal{H}(G,\lambda))$ is the centre of the ring $\mathcal{H}(G,\lambda)$. 

The representations of a Bernstein component can be seen as modules over Hecke algebra. Let $\mathcal{R}_{\lambda}(G)$ be a full subcategory of $\mathcal{R}(G)$ such that $(\pi,V)$ is an object of $\mathcal{R}_{\lambda}(G)$ if and only if $V$ is generated by $V^{\lambda}$ (the $\lambda$-isotypical component of $V$) as $G$-representation. Define $\mathcal{H}(G,\lambda):= \mathcal{H}(G,J,\lambda):=\mathrm{End}_{G}(\cI_J^G \lambda)$, the Hecke algebra of the type $(J,\lambda)$. Then for any $\Omega$-type $(J,\lambda)$, by \cite[Theorem 4.2 (ii)]{MR1643417},  the functor:
$$\begin{array}{ccccc}
\mathfrak{M}_{\lambda} & : & \mathcal{R}_{\lambda}(G) & \to & \mathcal{H}(G,\lambda)\text{-Mod} \\
& & \pi &  \mapsto & \mathrm{Hom}_{J}(\lambda, \pi) = \mathrm{Hom}_{G}(\cI_J^G \lambda, \pi)\\
\end{array}$$
\noindent is an equivalence of categories. Since  $(J,\lambda)$ is an $\Omega$-type, we have $\mathcal{R}^{\Omega}(G)=\mathcal{R}_{\lambda}(G)$.

As in \cite{MR1728541} a \textit{partition} is a function $P:\Z_{\geq 1} \rightarrow \Z_{\geq 0}$ with finite support; we say that $P$ is a partition of an integer $k:=\sum_{n=1}^{+\infty} P(n).n$. Usually a partition $P$ of $k$ is represented by a sequence $(m_1,\ldots,m_k)$, with $m_1 \geq \ldots \geq m_k \geq 0$ and $m_1+\ldots+m_k=k$, where one omits the zeroes from that list. The integers $m_i$ are related to $P$ as follows, $m_k=P(k)$, $m_{k-1}=P(k)+P(k-1)$,..., $m_1=P(k)+\ldots+P(1)$. We define a partial ordering on the set $\mathbb{P}$ of partitions as follows. We write $\lambda =(\lambda_{1},\ldots, \lambda_{k}) \geqq \mu =(\mu_{1},\ldots, \mu_{k})$ if and only if $\sum_{i=1}^{j} \lambda_{i} \leq \sum_{i=1}^{j} \mu_{i}$ for all integers $j$. The smallest partition for this partial order is $(k)$ and the biggest is $(1,\ldots,1)$ ($k$ times $1$). This is the opposite of the usual order on partitions (\cite[Chapter 5, Section 5.1.4]{MR3077154}).

As in \cite[section 2]{MR1728541}, let $\mathcal{C}$ be a system of representatives for the irreducible supercuspidal representations of any $GL_k(F)$ ($ k \in \Z_{\geq 1}$) up to unramified twist. A partition-valued function is a function $\mathcal{P}: \mathcal{C} \rightarrow \mathbb{P}$ with finite support. The set of partition-valued functions is partially ordered with respect to the partial ordering on partitions defined in the paragraph above by setting $\mathcal{P}\leq \mathcal{P}'$ if and only if $\mathcal{P}(\tau) \leq \mathcal{P}'(\tau)$, $\forall \tau \in \mathcal{C}$. Choose a partition-valued function $\mathcal{P}^{min}$ which is minimal for this partial ordering as in \cite{MR1728541}. Recall the decomposition (\ref{decomp}) from the introduction:
\[\mathrm{Ind}_{J}^{K} \lambda = \bigoplus_{\mathcal{P}} \sigma_{\mathcal{P}}(\lambda)^{\oplus m_{\mathcal{P},\lambda}},\]
\noindent where the summation runs over partition-valued functions.
From now on let $\sigma_{min}(\lambda) := \sigma_{\mathcal{P}^{min}}(\lambda)$ with the notations of section 6 in \cite{MR1728541}.

Let me introduce some further notation. Denote by $W$ the vector space on which the representation $\lambda$ is realized. Next, let $(\check{\lambda},W^{\vee})$ denote the contragradient of $(\lambda,W)$. Then by \cite[(2.6)]{MR1711578}, the Hecke algebra $\mathcal{H}(G, \lambda)$ can be identified with the space of compactly supported functions $f : G \longrightarrow \mathrm{End}_{C}(W^{\vee})$ such that $f(j_1.g.j_2) = \check{\lambda}(j_1)\circ f(g) \circ \check{\lambda}(j_2)$, with $j_1$, $j_2 \in J$ and $g \in G$ and the multiplication of two elements $f_1$ and $f_2$ is given by convolution:
$$f_1*f_2(g) = \int_G f_1(x)\circ f_2(x^{-1}g) dx.$$
For $u \in \mathrm{End}_C(W^{\vee})$, we write $u^{\vee} \in \mathrm{End}_{C}(W)$ for the transpose of $u$ with respect of the canonical pairing between $W$ and $W^{\vee}$. This gives $(\check{\lambda}(j))^{\vee} = \lambda(j)$, for $j \in J$. For $f \in \mathcal{H}(G,\lambda)$, define $\check{f} \in \mathcal{H}(G, \check{\lambda})$, by $\check{f}(g) = f(g^{-1})^{\vee}$, for all $g \in G$.

\section{Simple types}\label{M.8}

Let $E=F[\beta]$ be a finite field extension of $F$. Define $R=n/[E:F]$. Let $(J, \lambda)$ a simple type in $G$, where $J$ is a compact open subgroup in $G$ and $\lambda= \kappa \otimes \sigma$ with $\kappa$ a $\beta$-extension and $\sigma$ the inflation of $\tau \otimes\ldots\otimes \tau$  ($e$-times), where $\tau$ a cuspidal representation of $GL_{f}(k_{E})$,  and we have $R=ef$.

Let $W=S_{e}$ a symmetric group in $e$ elements, and let $S$ be the subset of $W$ of all the transpositions $(i,i+1)$. Then $(W,S)$ be a Coxeter group. The Hecke algebra of $(W,S)$, denoted $\mathcal{H}_{W}$, is spanned by elements $T_{w}$, $w\in W$, subject to relations:
$$T_{x}T_{y}=T_{xy} \mbox{  if  } l(xy)=l(x)+l(y)$$
$$T_{s}^{2} = (q-1)T_{s}+q \mbox{ for all } s\in S,$$ 
\noindent where $l$ denotes the length of reduced decomposition of an elements in $W$.

In this section $\overline{B}:=B(k_{E})$ is the Borel subgroup of $\overline{G}_{e}=GL_{e}(k_{E})$ and let $\overline{G}=GL_{R}(k_{E})$.  We will always identify $w\in W$ with a matrix in $\overline{G}_{e}$ or with a matrix in $\overline{G}$, depending on the context.

Let $\overline{P}$ be a subgroup of $\overline{G}$ consisting of is upper triangular matrices by blocs with bloc sizes $f\times f$. Let $\phi_{w} \in \mathcal{H}(\overline{G},\sigma)$ is null outside $\overline{P}w\overline{P}$ such that $\phi_{w}(p_{1}wp_{2})=\sigma(p_{1})\circ \phi_{w}(w) \circ \sigma(p_{2})$ and $\phi_{w}(w)(y_{1}\otimes\ldots\otimes y_{e})= y_{w(1)}\otimes\ldots\otimes y_{w(e)}$. The homomorphism of Hecke algebras, as in (5.6.1) \cite{MR1204652}:
$$\begin{array}{ccccc}
\Psi & : & \mathcal{H}_{W} & \to & \mathcal{H}(\overline{G},\sigma) \\
& & T_{w} &  \mapsto & \phi_{w} \\
\end{array}$$ 

\noindent is actually an isomorphism according to Theorem 5.1 in Chapter 1 \cite{MR821216}. In fact one can carry out a calculation to prove that  $\phi_{w}$ are generators of $\mathcal{H}(\overline{G},\sigma)$ and they satisfy the same relations as $T_{w}$ in $\mathcal{H}_{W}$.

We have the following isomorphisms of Hecke algebras:
$$\mathcal{H}_{W} \simeq \mathrm{End}_{\overline{G}_{e}}(\mathrm{Ind}_{B}^{\overline{G}_{e}}1)$$
\noindent and 
$$\mathcal{H}(\overline{G},\sigma) \simeq \mathrm{End}_{\overline{G}}(\mathrm{Ind}_{\overline{P}}^{\overline{G}}\sigma)$$

Let $\mathscr{M}(\overline{G}_{e})$ be the category of $\overline{G}_{e}$-representations and $\mathscr{M}_{\nu}(\overline{G}_{e})$ the full subcategory of $\mathscr{M}(\overline{G}_{e})$ of all $\overline{G}_{e}$-representations whose irreducible constituents all have cuspidal support $\nu=\tau \otimes\ldots\otimes\tau$. Define:
$$\begin{array}{ccccc}
&  & \mathscr{M}_{1}(\overline{G}_{e}) & \to & \mathcal{H}_{W} \\
& & \pi &  \mapsto & \mathrm{Hom}_{\overline{G}_{e}}(\mathrm{Ind}_{B}^{\overline{G}_{e}}1, \pi) \\
\end{array}$$
$$\begin{array}{ccccc}
\Psi' & : & \mathcal{H}_{W}-Mod & \to & \mathcal{H}(\overline{G},\sigma)-Mod \\
& & M &  \mapsto & M \otimes_{\mathcal{H}_{W}} \mathcal{H}(\overline{G},\sigma) \\
\end{array}$$
$$\begin{array}{ccccc}
&  & \mathcal{H}(\overline{G},\sigma)-Mod & \to & \mathscr{M}_{\nu}(\overline{G}) \\
& & M &  \mapsto & M \otimes_{\mathcal{H}(\overline{G},\sigma)} \mathrm{Ind}_{\overline{P}}^{\overline{G}}\sigma  \\
\end{array}$$
Let  $\overline{H}_e : \mathscr{M}_{1}(\overline{G}_{e}) \to \mathscr{M}_{\nu}(\overline{G})$ the composition of these 3 functors.

First notice that $\overline{H}_e(\mathrm{Ind}_{B}^{\overline{G}_{e}}1) = \mathrm{Ind}_{\overline{P}}^{\overline{G}}\sigma$. Let $\overline{Q}$ be any standard parabolic of $\overline{G}_{e}$. We obtain a standard parabolic $\widehat{Q}$ of $\overline{G}$ from $\overline{Q}$ by enlarging each entry of $\overline{Q}$ to a bloc of size $f\times f$. We use the same convention for Levi subroups.

\begin{lemma}\label{1.13}
	Let $Q$ be a standard parabolic of $\overline{G}_{e}$ and $\tilde{Q}$ as above, a standard parabolic of $\overline{G}$. Then:
	$$\overline{H}_e(\mathrm{Ind}_{\overline{Q}}^{\overline{G}_{e}}1)= \mathrm{Ind}_{\widehat{Q}}^{\overline{G}}\sigma$$
\end{lemma}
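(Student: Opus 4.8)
The plan is to reduce the statement, by transitivity of parabolic induction on both sides of the functor $\overline{H}_e$, to the already-established base case $\overline{H}_e(\mathrm{Ind}_B^{\overline{G}_e}1) = \mathrm{Ind}_{\overline{P}}^{\overline{G}}\sigma$. First I would observe that, since $\overline{Q}$ is a standard parabolic of $\overline{G}_e$ containing $B$, we have a transitivity isomorphism $\mathrm{Ind}_B^{\overline{G}_e}1 \cong \mathrm{Ind}_{\overline{Q}}^{\overline{G}_e}\bigl(\mathrm{Ind}_{B \cap \overline{L}_Q}^{\overline{L}_Q}1\bigr)$, where $\overline{L}_Q$ is the standard Levi of $\overline{Q}$; more to the point, $\mathrm{Ind}_{\overline{Q}}^{\overline{G}_e}1$ is a direct summand of $\mathrm{Ind}_B^{\overline{G}_e}1$ (it is the image of a suitable idempotent in $\mathcal{H}_W$, namely the one attached to the parabolic subgroup $W_Q \subset W$ associated to $\overline{Q}$ — explicitly $e_Q = \frac{1}{\lvert W_Q\rvert_q}\sum_{w \in W_Q} T_w$ up to normalization, using that the $T_w$ for $w \in W_Q$ generate a finite-dimensional subalgebra acting on the Iwahori-fixed vectors). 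Since $\overline{H}_e$ is built from three functors each of which is (exact and) additive — indeed the middle one is $-\otimes_{\mathcal{H}_W}\mathcal{H}(\overline{G},\sigma)$ and the outer two are equivalences or tensor functors of the same shape — it commutes with taking direct summands cut out by idempotents. Hence it suffices to match up the summand of $\mathrm{Ind}_B^{\overline{G}_e}1$ indexed by $W_Q$ with the summand $\mathrm{Ind}_{\widehat{Q}}^{\overline{G}}\sigma$ of $\mathrm{Ind}_{\overline{P}}^{\overline{G}}\sigma$.

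The key step, then, is to check that the isomorphism $\mathcal{H}_W \xrightarrow{\Psi} \mathcal{H}(\overline{G},\sigma)$ carries the idempotent $e_Q$ (for the subgroup $W_Q$ of $W = S_e$) to the idempotent of $\mathcal{H}(\overline{G},\sigma) \cong \mathrm{End}_{\overline{G}}(\mathrm{Ind}_{\overline{P}}^{\overline{G}}\sigma)$ that projects onto the $\mathrm{Ind}_{\widehat{Q}}^{\overline{G}}\sigma$-isotypic summand. This is where the explicit description of $\Psi$ via the $\phi_w$'s is used: $\Psi$ sends $T_w \mapsto \phi_w$ and preserves length, so it sends the finite parabolic subalgebra $\mathcal{H}_{W_Q} \subset \mathcal{H}_W$ isomorphically onto the span of the $\phi_w$, $w \in W_Q$, which is exactly the endomorphism algebra governing the $\widehat{Q}$-induced piece — one identifies $\mathrm{Ind}_{\widehat{Q}}^{\overline{G}}\sigma$ with $\mathrm{Ind}_{\overline{P}}^{\overline{G}}\bigl(\mathrm{Ind}_{\overline{P}\cap \overline{L}_{\widehat Q}}^{\overline{L}_{\widehat Q}}\sigma\bigr)$ and notes that $\mathrm{Ind}_{\overline{P}\cap \overline{L}_{\widehat Q}}^{\overline{L}_{\widehat Q}}\sigma$ corresponds under the Levi-version of $\Psi$ to the trivial module of $\mathcal{H}_{W_Q}$. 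Combining this with the commutation of $\overline{H}_e$ with the idempotent projections gives $\overline{H}_e(\mathrm{Ind}_{\overline{Q}}^{\overline{G}_e}1) = \mathrm{Ind}_{\widehat{Q}}^{\overline{G}}\sigma$.

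The main obstacle I anticipate is bookkeeping rather than conceptual: one must be careful that the normalization of induction, the $q$-powers appearing in the idempotents $e_Q$, and the identification ``enlarge each entry of $\overline{Q}$ to an $f\times f$ block'' are all compatible — in particular that $W_Q$ as a Coxeter subgroup of $S_e$ really does match the block parabolic $\widehat Q$ under the matrix identification $W \hookrightarrow \overline{G}_e$ and $W \hookrightarrow \overline{G}$ fixed earlier, and that the three functors composing $\overline{H}_e$ genuinely commute with the relevant idempotents (for the two tensor-product functors this is automatic; for the Hom functor $\pi \mapsto \mathrm{Hom}_{\overline{G}_e}(\mathrm{Ind}_B^{\overline{G}_e}1,\pi)$ one uses that it is an equivalence onto $\mathcal{H}_W\text{-Mod}$, so carries the summand decomposition of $\mathrm{Ind}_{\overline Q}^{\overline{G}_e}1$ to the corresponding decomposition of $e_Q\mathcal{H}_W$). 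None of these requires a hard argument, but getting the indices and the block structure lined up precisely is the place where an error could creep in.
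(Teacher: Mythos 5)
Your proof is correct, and it reaches the same endpoint as the paper but organizes the reduction differently. The paper works with the explicit Mackey/double-coset decomposition
\[
\mathrm{Hom}_{\overline{G}_{e}}\bigl(\mathrm{Ind}_{\overline{B}}1,\ \mathrm{Ind}_{\overline{Q}}^{\overline{G}_{e}}1\bigr)
\;=\;\bigoplus_{w \in W/W_{\overline{Q}}} \mathrm{Hom}_{\overline{B}^{w}\cap \overline{Q}}(1, 1^{w}),
\]
identifies each summand with the functions in $\mathcal{H}_W$ supported on $\overline{B}w\overline{Q}$, and then uses the support-preserving property of $\Psi$ to carry those, coset by coset, to the functions in $\mathcal{H}(\overline{G},\sigma)$ supported on $\overline{P}w\widehat{Q}$, recovering $\mathrm{Hom}_{\overline{G}}(\mathrm{Ind}_{\overline{P}}^{\overline{G}}\sigma,\ \mathrm{Ind}_{\widehat{Q}}^{\overline{G}}\sigma)$ before tensoring back up. You instead package the whole decomposition into the single idempotent $e_Q \in \mathcal{H}_{W_Q}\subset\mathcal{H}_W$ and argue that additivity of $\overline{H}_e$ plus compatibility of $\Psi$ with the parabolic subalgebras does the bookkeeping in one step. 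These are two presentations of the same underlying fact: $e_Q\mathcal{H}_W$ has a basis indexed by $W_Q\backslash W$, so your idempotent argument implicitly contains the coset decomposition the paper writes out. What the paper's version buys is that it never needs to verify $\Psi(e_Q)$ is the correct projector $\epsilon_{\widehat{Q}}$ onto $\mathrm{Ind}_{\widehat{Q}}^{\overline{G}}\sigma$ — matching supports on double cosets is immediate from $\Psi$ being support-preserving — whereas in your version that identification is the one non-automatic step, exactly as you flag at the end (the $q$-normalizations of $e_Q$ and $\epsilon_{\widehat{Q}}$, and the compatibility of the Levi-level idempotent with the embedding $\mathcal{H}(\overline{L}_{\widehat{Q}},\sigma)\hookrightarrow\mathcal{H}(\overline{G},\sigma)$, need to be checked). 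Your approach is arguably cleaner to state and would generalize slightly more readily to other induced summands, at the cost of that extra verification. One small terminological slip: in the finite-group setting here, ``Iwahori-fixed vectors'' should be $\overline{B}$-fixed vectors, i.e.\ the functor $\pi\mapsto\mathrm{Hom}_{\overline{G}_e}(\mathrm{Ind}_{\overline{B}}1,\pi)\cong\pi^{\overline{B}}$; but this does not affect the argument.
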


\begin{proof} Let $W_{\overline{Q}}$ be the parabolic subgroup of $W$ associated to $\overline{Q}$. We have
	$$\overline{H}_e(\mathrm{Ind}_{\overline{Q}}^{\overline{G}_{e}}1) = \mathrm{Hom}_{\overline{G}_{e}}(\mathrm{Ind}_{\overline{B}}^{}1, \mathrm{Ind}_{\overline{Q}}^{\overline{G}_{e}}1) \otimes_{\mathcal{H}(\overline{G},\sigma)} \mathrm{Ind}_{\overline{P}}^{\overline{G}}\sigma$$
	$$=\bigoplus_{w \in W/W_{\overline{Q}}} \mathrm{Hom}_{\overline{B}^{w}\cap \overline{Q}}(1, 1^{w})\otimes_{\mathcal{H}(\overline{G},\sigma)} \mathrm{Ind}_{\overline{P}}^{\overline{G}}\sigma$$
	
	We identify, as usual, $\mathrm{Hom}_{B^{w}\cap Q}(1, 1^{w})$ with the set of functions in $\mathcal{H}_{W}$ supported on $\overline{B}w\overline{Q}$. Via the isomorphism $\Psi$ of Hecke algebras, the set functions in $\mathcal{H}_{W}$ supported on $\overline{B}w\overline{Q}$ is in bijection with the set of functions in $\mathcal{H}(\overline{G},\sigma)$ supported on $\overline{P}w\widehat{Q}$ and this set is indetified with the intertwining set  $\mathrm{Hom}_{\overline{P}^{w}\cap \widehat{Q}}(\sigma, \sigma^{w})$. It follows, that:
	$$\overline{H}_e(\mathrm{Ind}_{Q}^{\overline{G}_{e}}1)\simeq \bigoplus_{w \in W/W_{\overline{Q}}} \mathrm{Hom}_{\overline{P}^{w}\cap \tilde{Q}}(\sigma, \sigma^{w}) \otimes_{\mathcal{H}(\overline{G},\sigma)} \mathrm{Ind}_{\overline{P}}^{\overline{G}}\sigma$$ 
	$$= \mathrm{Hom}_{\overline{G}}(\mathrm{Ind}_{\overline{P}}^{\overline{G}}\sigma, \mathrm{Ind}_{\tilde{Q}}^{\overline{G}}\sigma)\otimes_{\mathcal{H}(\overline{G},\sigma)} \mathrm{Ind}_{\overline{P}}^{\overline{G}}\sigma$$
	\noindent The result follows.
\end{proof}

Let $st(\tau,e)$ be a representation of $\overline{G}_{fe}$, defined as the unique nondegenerate irreducible representation with cuspidal support $\tau \otimes\ldots\otimes \tau$  ($e$-times). Since $\overline{H}_e$ is exact, $\overline{H}_e(st(1,e))=\frac{\overline{H}_e(\mathrm{Ind}_{B}^{\overline{G}_{e}}1)}{\sum_{Q \varsupsetneq B}\overline{H}_e(\mathrm{Ind}_{Q}^{\overline{G}_{e}}1)}$ and by previous lemma, $\overline{H}_e(st(1,e))=st(\tau,e)$.

\begin{lemma}\label{1.14}
	Let $\overline{M'}$ be the Levi subgroup of $\overline{Q}$. The following diagram commutes:
	$$\xymatrixcolsep{5pc}\xymatrix{
		\mathscr{M}_{1}(\overline{G}_{e})  \ar[r]^{\overline{H}_e} &\mathscr{M}_{\nu}(\overline{G}) \\
		\mathscr{M}_{1}(\overline{M'}) \ar[u]^{\mathrm{Ind}_{\overline{Q}}^{\overline{G}_{e}}} \ar[r]^{\overline{H}_{\overline{M'}}} &\mathscr{M}_{\nu_{\widehat{M'}}}(\widehat{M'}) \ar[u]^{\mathrm{Ind}_{\widehat{Q}}^{\overline{G}}}
	}$$
	\noindent where the horizontal arrows are an equivalence of categories, and $\nu_{\widehat{M'}}$ denotes the restriction of the cuspidal support $\nu$ to $\widehat{M'}$.
\end{lemma}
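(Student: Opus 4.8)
The plan is to reduce the commutativity of the square to transitivity of the functor $\overline{H}_?$ with respect to parabolic induction, using the fact that, on both sides of the square, every object is built (as a subquotient, in fact as a quotient of a sum) from the standard induced modules $\mathrm{Ind}_{\overline{R}}^{\overline{G}_e}1$ for standard parabolics $\overline{R}$, and that $\overline{H}_e$ is exact (hence commutes with taking the relevant quotients). Concretely, I would first observe that the category $\mathscr{M}_1(\overline{G}_e)$ is generated, under subquotients and finite direct sums, by the modules $\mathrm{Ind}_{\overline{R}}^{\overline{G}_e}1$ with $\overline{R} \supseteq \overline{B}$; this is immediate from the fact that $\mathcal{H}_W$-$\mathrm{Mod}$ has a projective generator $\mathcal{H}_W = \mathrm{Hom}_{\overline{G}_e}(\mathrm{Ind}_{\overline{B}}^{\overline{G}_e}1, \mathrm{Ind}_{\overline{B}}^{\overline{G}_e}1)$ and that the $\mathrm{Ind}_{\overline{R}}^{\overline{G}_e}1$ correspond under $\overline{H}_e$ (Lemma \ref{1.13}) to the parabolic subalgebra-induced modules. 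Since both horizontal functors are equivalences of categories (they are compositions of the equivalence $\mathfrak{M}_\lambda$-type functors of \cite{MR1643417} with the Hecke algebra isomorphism $\Psi$ and the module-twist $\Psi'$, each of which is an equivalence), it suffices to check that the two composite functors $\overline{H}_e \circ \mathrm{Ind}_{\overline{Q}}^{\overline{G}_e}$ and $\mathrm{Ind}_{\widehat{Q}}^{\overline{G}} \circ \overline{H}_{\overline{M'}}$ agree on a generating family, together with a check on morphisms.

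For the generating family, I would take the objects $\mathrm{Ind}_{\overline{R}'}^{\overline{M'}}1$ for standard parabolics $\overline{R}'$ of $\overline{M'}$. Write $\overline{R}$ for the standard parabolic of $\overline{G}_e$ with Levi $\overline{M'}\cap(\text{Levi of }\overline{R})$ — more precisely the parabolic $\overline{R} = \overline{R}'\cdot \overline{Q}$ obtained by inducing in stages, so that $\mathrm{Ind}_{\overline{Q}}^{\overline{G}_e}\mathrm{Ind}_{\overline{R}'}^{\overline{M'}}1 = \mathrm{Ind}_{\overline{R}}^{\overline{G}_e}1$ by transitivity of parabolic induction over $k_E$. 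Applying Lemma \ref{1.13} on the top edge gives $\overline{H}_e(\mathrm{Ind}_{\overline{R}}^{\overline{G}_e}1) = \mathrm{Ind}_{\widehat{R}}^{\overline{G}}\sigma$. On the other path, applying the corresponding statement of Lemma \ref{1.13} for the Levi $\overline{M'}$ (which is itself a product of general linear groups over $k_E$, so the lemma and all the above apply verbatim) gives $\overline{H}_{\overline{M'}}(\mathrm{Ind}_{\overline{R}'}^{\overline{M'}}1) = \mathrm{Ind}_{\widehat{R}'}^{\widehat{M'}}\sigma_{\widehat{M'}}$, and then $\mathrm{Ind}_{\widehat{Q}}^{\overline{G}}\mathrm{Ind}_{\widehat{R}'}^{\widehat{M'}}\sigma_{\widehat{M'}} = \mathrm{Ind}_{\widehat{R}}^{\overline{G}}\sigma$, again by transitivity of parabolic induction, now over $\overline{G}$, once one checks that the block-enlargement operation $\overline{R}' \leadsto \widehat{R}'$ is compatible with composition of parabolics, i.e. $\widehat{\overline{R}'\cdot\overline{Q}} = \widehat{R}'\cdot\widehat{Q}$. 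So the two functors agree on these objects, and on morphisms between them the agreement follows because on morphisms everything is controlled by the Hecke-algebra isomorphism $\Psi$, which intertwines the parabolic subalgebras associated to $\overline{R}$ and to $\widehat{R}$ (this is exactly the mechanism of the proof of Lemma \ref{1.13}, where supports on $\overline{B}w\overline{Q}$ match supports on $\overline{P}w\widehat{Q}$).

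The main obstacle I anticipate is the bookkeeping needed to make the phrase "the same construction for the Levi $\overline{M'}$" precise: one must verify that the Bushnell--Kutzko/Schneider--Zink-type machinery used to define $\overline{H}_e$ for $\overline{G}_e$ descends to each block-Levi $\overline{M'}$ in a way that is genuinely functorial and compatible with the inclusion $\overline{M'} \hookrightarrow \overline{G}_e$ and $\widehat{M'}\hookrightarrow \overline{G}$ — i.e. that the restriction-of-cuspidal-support functor and the tensor-product factorization of the Hecke algebras along Levi subgroups are compatible with $\Psi$, $\Psi'$ and the final module-to-representation functor. Once that compatibility is in place — and it is essentially forced, since all the relevant Hecke algebras are type $A$ affine/finite Hecke algebras and the isomorphisms $\Psi$ are the standard ones — the commutativity of the square is a formal consequence of exactness of $\overline{H}_e$ and transitivity of parabolic induction, as above. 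I would therefore spend the bulk of the write-up pinning down this compatibility and then conclude in one line.
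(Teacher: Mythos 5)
Your proposal is correct and follows essentially the same route as the paper: reduce to checking the square on a generating family of objects, invoke Lemma~\ref{1.13} (and its analogue for the Levi $\overline{M'}$) together with transitivity of parabolic induction, and use exactness of $\overline{H}_e$ to propagate the identification. The paper's own proof is terser — it states that one checks the diagram on irreducibles (with a notational slip referring to ``unramified characters of $L$'' in a finite-group context) and then appeals to ``the same argument as in Lemma~\ref{1.13}'' — whereas you make the reduction precise by working with the parabolically induced modules $\mathrm{Ind}_{\overline{R}'}^{\overline{M'}}1$, noting the compatibility $\widehat{\overline{R}'\cdot\overline{Q}} = \widehat{R}'\cdot\widehat{Q}$, and flagging the Hecke-algebra compatibility needed to upgrade object-level agreement to a natural isomorphism of functors; that extra care is warranted and consistent with the paper's intent.
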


\begin{proof}  It is enough to check that this diagram commutes for every irreducible representation of $\mathscr{M}_{1}(\overline{M'})$. By definition, the irreducible representations of $\mathscr{M}_{1}(\overline{M'})$ are just unramified characters of $L$. Moreover, by Lemma \ref{1.13} we have that $\overline{H}_e(\mathrm{Ind}_{\overline{Q}}^{\overline{G}_{e}}1)= \mathrm{Ind}_{\widehat{Q}}^{\overline{G}}\overline{H}_{\overline{M'}}(1)$. The same identity holds if $1$ is replaced any unramified character of $L$, by the same argument as in Lemma \ref{1.13}.
\end{proof}

We identify the partitions and partition valued functions. Let $\tilde{\mathcal{P}}$ be the partition $(e_{1}f,\ldots,e_{k}f)$ of $R$ associated to parabolic subgroup $\tilde{Q}$ and $\mathcal{P}$ the partition $(e_{1},\ldots,e_{k})$ of $e$ associated to parabolic subgroup $Q$. Define $\pi(\tau,\tilde{\mathcal{P}})=\mathrm{Ind}_{\tilde{Q}}^{\overline{G}} st(\tau,e_{1}) \otimes \ldots\otimes st(\tau,e_{k})$ and $\sigma(\tau,\tilde{\mathcal{P}})$ the representation of $\overline{G}$ that occurs in $\pi(\tau,\tilde{\mathcal{P}})$ with multiplicity 1 and not in $\pi(\tau,\mathcal{Q})$ if $\mathcal{Q}> \tilde{\mathcal{P}}$.

\begin{lemma}\label{1.15}
	Let $Q$ be a standard parabolic of $\overline{G}_{e}$ and $\tilde{Q}$ as above, a standard parabolic of $\overline{G}$. Then:
	$$\overline{H}_e(\sigma(1,\mathcal{P}))= \sigma(\tau,\tilde{\mathcal{P}})$$
\end{lemma}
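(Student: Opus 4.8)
The statement I want to prove is that $\overline{H}_e$ carries the distinguished irreducible $\sigma(1,\mathcal P)$ inside $\mathrm{Ind}_Q^{\overline G_e}1$ to the distinguished irreducible $\sigma(\tau,\tilde{\mathcal P})$ inside $\mathrm{Ind}_{\tilde Q}^{\overline G}\sigma$. My strategy is to reduce everything to what has already been established for the chain of induced modules $\mathrm{Ind}_Q^{\overline G_e}1$ and the fact that $\overline H_e$ is an exact equivalence, then pin down $\sigma(\tau,\tilde{\mathcal P})$ by its characterizing property (occurs once in $\pi(\tau,\tilde{\mathcal P})$, not in any $\pi(\tau,\mathcal Q)$ with $\mathcal Q>\tilde{\mathcal P}$). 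Concretely, let me think of $\sigma(1,\mathcal P)$ as defined via $\pi(1,\mathcal P)=\mathrm{Ind}_Q^{\overline G_e}\big(st(1,e_1)\otimes\cdots\otimes st(1,e_k)\big)$, which by definition of $st$ on the $GL$ of a finite field is exactly the quotient of $\mathrm{Ind}_Q^{\overline G_e}1$ by the sum of the $\mathrm{Ind}_{Q'}^{\overline G_e}1$ with $Q'$ ranging over the standard parabolics strictly containing $Q$ inside $\overline{G}_e$ — equivalently, block-wise the Steinberg quotient. So the first step is to write $\pi(1,\mathcal P)$ as such a quotient of induced trivial modules.

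Second step: apply $\overline H_e$ and use Lemma \ref{1.13} together with exactness of $\overline H_e$. Since $\overline H_e(\mathrm{Ind}_{Q'}^{\overline G_e}1)=\mathrm{Ind}_{\widehat{Q'}}^{\overline G}\sigma$ and $\overline H_e$ is exact, $\overline H_e$ of the quotient describing $\pi(1,\mathcal P)$ is the corresponding quotient of $\mathrm{Ind}_{\widehat Q}^{\overline G}\sigma$ by $\sum \mathrm{Ind}_{\widehat{Q'}}^{\overline G}\sigma$. Using Lemma \ref{1.14} (compatibility with parabolic induction), this quotient is precisely $\mathrm{Ind}_{\tilde Q}^{\overline G}\big(st(\tau,e_1)\otimes\cdots\otimes st(\tau,e_k)\big)=\pi(\tau,\tilde{\mathcal P})$, because on each $f\times f$-enlarged block the Steinberg quotient of $\mathrm{Ind}_{\overline P_{\text{block}}}^{GL_{e_if}}\sigma$ is exactly $st(\tau,e_i)$ — this is the $\overline H_{e_i}(st(1,e_i))=st(\tau,e_i)$ computation already recorded right after Lemma \ref{1.13}, and Lemma \ref{1.14} glues the blocks. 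Hence $\overline H_e(\pi(1,\mathcal P))=\pi(\tau,\tilde{\mathcal P})$.

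Third step: transport the defining property. Because $\overline H_e$ is an exact equivalence of categories, it induces a bijection on irreducible constituents preserving multiplicities, and it is monotone for the two partial orders in the obvious sense: $\overline H_e(\pi(1,\mathcal Q))=\pi(\tau,\tilde{\mathcal Q})$ for every standard parabolic, by the same argument as in step two applied to $\mathcal Q$ in place of $\mathcal P$; and $\mathcal Q>\mathcal P$ on $e$-partitions corresponds to $\tilde{\mathcal Q}>\tilde{\mathcal P}$ on $R$-partitions, since multiplying every part by $f$ is an order isomorphism onto its image. Therefore $\sigma(1,\mathcal P)$ — the unique constituent of $\pi(1,\mathcal P)$ with multiplicity one that does not appear in any $\pi(1,\mathcal Q)$, $\mathcal Q>\mathcal P$ — is sent by $\overline H_e$ to the unique constituent of $\pi(\tau,\tilde{\mathcal P})$ with multiplicity one not appearing in any $\pi(\tau,\tilde{\mathcal Q})$, $\tilde{\mathcal Q}>\tilde{\mathcal P}$, which is by definition $\sigma(\tau,\tilde{\mathcal P})$. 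This gives $\overline H_e(\sigma(1,\mathcal P))=\sigma(\tau,\tilde{\mathcal P})$.

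**Main obstacle.** The routine parts are the exactness/equivalence bookkeeping and the order-isomorphism remark. The one genuinely delicate point is making sure that "the Steinberg quotient commutes with the block-enlargement" is valid, i.e. that $\overline H_e(\pi(1,\mathcal P))=\pi(\tau,\tilde{\mathcal P})$ really holds as stated rather than merely up to semisimplification — this requires that the sum of the sub-objects $\mathrm{Ind}_{\widehat{Q'}}^{\overline G}\sigma$ inside $\mathrm{Ind}_{\widehat Q}^{\overline G}\sigma$ is the image under $\overline H_e$ of the corresponding sum inside $\mathrm{Ind}_Q^{\overline G_e}1$, which is exactly where one invokes exactness of $\overline H_e$ and the explicit identification of images in Lemma \ref{1.13}. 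A secondary subtlety is that the constituents of $\pi(\tau,\tilde{\mathcal P})$ not coming from the $e$-side (if any) must be accounted for; but since $\overline H_e$ is an equivalence onto $\mathscr M_\nu(\overline G)$ and $\pi(\tau,\tilde{\mathcal P})$ lies in $\mathscr M_\nu(\overline G)$, every constituent is in the image, so no such extra constituents exist, and the defining property transports cleanly.
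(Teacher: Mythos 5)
Your proposal is correct and takes essentially the same route as the paper: first establish $\overline{H}_e(\pi(1,\mathcal{P}))=\pi(\tau,\tilde{\mathcal{P}})$ using Lemmas~\ref{1.13}--\ref{1.14} and the block-wise identity $\overline{H}_{e_i}(st(1,e_i))=st(\tau,e_i)$, then transport the defining property of $\sigma(1,\mathcal P)$ through the exact equivalence $\overline{H}_e$. The only divergence is in the final step: the paper invokes an explicit quotient description, writing $\overline{H}_e(\sigma(1,\mathcal P))$ as $\pi(\tau,\tilde{\mathcal P})$ modulo the sum of the $\pi(\tau,\tilde{\mathcal Q})$ over smaller $\mathcal Q$, whereas you instead transport the characterization directly (``unique multiplicity-one constituent of $\pi(1,\mathcal P)$ not occurring in any $\pi(1,\mathcal Q)$ with $\mathcal Q>\mathcal P$'') using the facts that an exact equivalence is a multiplicity-preserving bijection on irreducibles and that $\mathcal Q\mapsto\tilde{\mathcal Q}$ is an order-embedding. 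This variation is arguably slightly cleaner, since it avoids having to justify the quotient formula itself and correctly notes that all constituents of $\pi(\tau,\tilde{\mathcal P})$ already lie in $\mathscr M_\nu(\overline G)$, so no constituents fall outside the image of $\overline H_e$; but it is the same underlying idea, not a genuinely different argument.
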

\begin{proof} By previous lemma we have that:
	$$\overline{H}_e(\pi(1,\mathcal{P})) = \mathrm{Ind}_{\tilde{Q}}^{\overline{G}} F_{L}( st(1,e_{1}) \otimes \ldots\otimes st(1,e_{k}) )$$
	$$ = \mathrm{Ind}_{\tilde{Q}}^{\overline{G}} F_{e_{1}}( st(1,e_{1})) \otimes \ldots\otimes F_{e_{k}}(st(1,e_{k})) = \pi(\tau,\tilde{\mathcal{P}})$$
	
	\noindent Since $\overline{H}_e$ is exact:
	$$\overline{H}_e(\sigma(1,\mathcal{P})) = \frac{\overline{H}_e(\pi(1,\mathcal{P})}{\sum_{\mathcal{Q} < \mathcal{P}}\overline{H}_e(\pi(1,\mathcal{Q})} =  \frac{\pi(\tau,\tilde{\mathcal{P}})}{\sum_{\mathcal{Q} < \mathcal{P}}\pi(\tau,\tilde{\mathcal{Q}})}=\sigma(\tau,\tilde{\mathcal{P}})$$
\end{proof}

Let $\pi$ be an irreducible representation containing a simple type $(J,\lambda)$. In this case $\Omega = [GL_{r}(F)^{e}, \omega \otimes \ldots \otimes \omega]_{G}$ where the tensor product $\rho : = \omega \otimes \ldots \otimes \omega$  is taken $e$ times and $\omega$ is a supercuspidal representation of $GL_{r}(F)$. According to the description of Hecke algebras in section (5.6) of \cite{MR1204652} there is a support preserving isomorphism of Hecke algebras $\mathcal{H}(G_{L}, I_{L}, 1)  \simeq \mathcal{H}(G, J, \lambda)$, where $L$ is an extension of $F$ (denoted by $K$ in \cite{MR1204652}), $G_{L}=GL_{e}(L)$ with $I_{L}$ the standard Iwahori subgroup of $G_{L}$. We denote by $K_{L}$ a maximal compact subgroup of $G_{L}$ containing $I_L$. 

We will recall now the results on supercuspidal representations from chapter~6 of \cite{MR1204652} and describe the general form of the supercuspidal representation $\omega$ of $G_0=GL_{r}(F)$. The representation $\omega$ contains a maximal simple type $(J_0, \lambda_0)$. This means that there are a finite extension $E$ of $F$ and a uniquely determined representation $\Lambda_0$ of $E^{\times}J_0$ such that $\omega = \cI_{E^{\times}J_0}^{G_0} \Lambda_0$ and $\Lambda_0|J_0 = \lambda_0$.  Let $f=\frac{n}{e[E:F]}$. According to  \cite[Proposition 5.5.14]{MR1204652}, the extension $L$ considered in the previous is unramified extension of degree $f$ of $E$. A special case of the support preserving isomorphism in the previous paragraph is the support preserving isomorphism $\Phi_{1}: \mathcal{H}(G_0, J_0, \lambda_0) \simeq \mathcal{H}(L^{\times}, \mathcal{O}_{L}^{\times}, 1)$ sending a function supported on $J_0\varpi_{E}J_0=\varpi_{E}J_0$ to a function supported on $\varpi_{E} \mathcal{O}_{L}^{\times}$, where $\varpi_{E}$ a uniformizer of both $E$ and $L$. Further we observe that the unramified characters of $G_0$ are determined by the image of $\varpi_{E}$, as are unramified characters of $L^{\times}$. Therefore we may and we will identify the unramified characters of $G_0$ with the unramified characters of $L^{\times}$.

The representation $\pi$ is a Langlands quotient of the form $Q(\Delta_{1},\ldots,\Delta_{s})$ (cf. \cite[Section 1.2 Theorem 1.2.5]{MR1265559}) such that for $i < j$ the segment $\Delta_{i}$ does not precede $\Delta_{j}$ (cf. \cite[Section 1.2 Definition 1.2.4]{MR1265559}). After twisting $\pi$ by some unramified character we may assume that all the segments are of the form $\Delta_{i}=[\omega(\alpha_{i}),\omega(\alpha_{i}+e_{i}-1)]$, where $\alpha_{i}\in C$ and $e_{i}$ an integer such that $\sum_{i=1}^{s} e_{i} = e$. Here the notation $\omega(\alpha_{i})$ means that $\omega(\alpha_{i}):=\omega \otimes \iota^{-1}(|\det|^{\iota(\alpha_{i})})$, the norm $|.|$ is viewed as taking values in $q^{\Z}\subset\Com$. 

Let $P$ be a standard parabolic of $G$ containing $M=GL_r(F)^e$ and let $B_L$ be a Borel subgroup of $G_L$ containing $T_L = (L^{\times})^{e}$.

According to \cite[Theorem 7.6.20]{MR1204652}, the diagram
\begin{equation}\tag{D1}\label{D1}
\xymatrix{
	\mathcal{H}(G, J, \lambda)  \ar[r]^{\Phi} &\mathcal{H}(G_{L}, I_{L}, 1)\\
	\mathcal{H}(M, J_{M}, \lambda_{M}) \ar[u]^{t_P} \ar[r]^{\Phi_{1}^{\otimes e}} &\mathcal{H}(T_{L}, T_{L}^{\circ}, 1) \ar[u]^{t_{B_L}}}
\end{equation}
\noindent is commutative, where the horizontal arrows are support preserving isomorphisms and $\lambda_{M} = \lambda_{0}\otimes \ldots \otimes \lambda_{0}$ ($e$ times), $J_{M} = (J_{0})^{e}$, $T_{L}=(L^{\times})^{e}$ and $T_{L}^{\circ} = (\mathcal{O}_L^{\times})^{e}$.  In \cite{MR1204652}, the horizontal isomorphisms in the commutative diagram above are given in the other direction. For $t:A\rightarrow A'$ a morphism $C$-algebras we write $t_{\ast} : A'-\mathrm{Mod}\longrightarrow A-\mathrm{Mod}$ for the induced functor given $\mathrm{Hom}_{A'}(A,\bullet)$.  The diagram above produces the following commutative diagram:
\begin{equation}\tag{D2}\label{D2}\xymatrix{
	\mathcal{R}_{\lambda}(G)  \ar[r]^-{M_{\lambda}} &\mathcal{H}(G, J, \lambda)\text{-Mod}  \ar[r]^{\Phi_{\ast}} &\mathcal{H}(G_{L}, I_{L}, 1)\text{-Mod}\ar[r]^-{T_{\lambda}} &\mathcal{R}_{1}(G_{L})\\
	\mathcal{R}_{\lambda_{M}}(M) \ar[u]^-{i_{P}^{G}} \ar[r]_-{M_{\lambda_M}} &\mathcal{H}(M, J_{M}, \lambda_{M})\text{-Mod} \ar[u]^{(t_{P})_{\ast}}  \ar[r]^{(\Phi_{1}^{\otimes e})_{\ast}} &\mathcal{H}(T_{L}, T_{L}^{\circ}, 1)\text{-Mod} \ar[u]^{(t_{B_L})_{\ast}} \ar[r]^-{T_1} &\mathcal{R}_{1}(T_{L})\ar[u]_-{i_{B_{L}}^{G_{L}}}}
\end{equation}
\noindent where the horizontal arrows are equivalences of categories, $T_{\lambda} = \bullet \otimes_{\mathcal{H}(G_{L}, I_{L}, 1)}\mathrm{c\text{--} Ind}_{I_{L}}^{G_{L}} 1$, $T_1=\bullet \otimes_{\mathcal{H}(T_{L}, T_{L}^{\circ}, 1)}\mathrm{c\text{--} Ind}_{T_{L}^{\circ}}^{T_{L}} 1$, $M_{\lambda}=\mathrm{Hom}_{J}(\lambda,\bullet)$ and $M_{\lambda_M}=\mathrm{Hom}_{J_{M}}(\lambda_{M},\bullet)$. The first and second squares are commutative as a consequence of \cite[Corollary 8.4]{MR1643417}, the commutativity of the middle square follows from diagram (\ref{D1}). Let $H$ be the composition of all the top horizontal arrows. Hence the functor $H:\mathcal{R}_{\lambda}(G) \longrightarrow \mathcal{R}_{1}(G_{L})$ from above is an equivalence of categories.

\begin{lemma}\label{1.20}
	We have $H(i_{P}^{G} \rho)=i_{B_{L}}^{G_{L}} 1$.
\end{lemma}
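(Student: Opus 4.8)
The plan is to trace the representation $i_P^G \rho$ through the chain of functors defining $H$ in diagram (\ref{D2}), using the commutativity of that diagram to reduce to understanding the image of $\rho$ itself under the bottom-row equivalence $H_M := i_{B_L}^{G_L} \circ T_1 \circ (\Phi_1^{\otimes e})_\ast \circ M_{\lambda_M}$. By commutativity of the left square of (\ref{D2}) (which comes from \cite[Corollary 8.4]{MR1643417}) together with the middle square (from diagram (\ref{D1})) and the right square, we have $H \circ i_P^G = i_{B_L}^{G_L} \circ H_M$, so it suffices to prove $H_M(\rho) = 1_{T_L}$, the trivial representation of $T_L = (L^\times)^e$. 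Since $\rho = \omega \otimes \cdots \otimes \omega$ and $H_M$ is the $e$-fold tensor product of the corresponding functor $H_0 : \mathcal{R}_{\lambda_0}(G_0) \to \mathcal{R}_1(L^\times)$ for a single block, this further reduces to showing $H_0(\omega) = 1_{L^\times}$, the trivial representation of $L^\times$.

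For the single-block statement, I would argue as follows. The supercuspidal $\omega$ of $G_0 = GL_r(F)$ contains the maximal simple type $(J_0, \lambda_0)$, and $M_{\lambda_0}(\omega) = \mathrm{Hom}_{J_0}(\lambda_0, \omega)$ is a simple module over $\mathcal{H}(G_0, J_0, \lambda_0) \simeq \mathcal{H}(L^\times, \mathcal{O}_L^\times, 1) \simeq C[X, X^{-1}]$, hence one-dimensional, determined by the scalar by which the generator (the function supported on $\varpi_E J_0$, transported to the function supported on $\varpi_E \mathcal{O}_L^\times$) acts. Because we have twisted $\pi$ so that all segments are built from $\omega$ itself with integer shifts — in particular the $\omega$ occurring here is the fixed representative with "$\alpha = 0$" — the generator acts by the scalar corresponding to the \emph{trivial} unramified character of $L^\times$. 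Transporting through $T_1$ and $i_{B_L}^{G_L}$ (here on the torus, $i_{B_L}^{G_L}$ restricted to the $T_L$ factor is just the identity identification $\mathcal{R}_1(T_L) = \prod_e \mathcal{R}_1(L^\times)$), the module $C$ with trivial $X$-action corresponds precisely to the trivial character $1$ of $L^\times$. Thus $H_0(\omega) = 1$ and therefore $H_M(\rho) = 1 \otimes \cdots \otimes 1 = 1_{T_L}$, giving $H(i_P^G \rho) = i_{B_L}^{G_L} 1$.

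The main obstacle I anticipate is pinning down exactly which unramified twist of $\omega$ corresponds to the trivial character of $L^\times$ under the identification $\Phi_1 : \mathcal{H}(G_0, J_0, \lambda_0) \simeq \mathcal{H}(L^\times, \mathcal{O}_L^\times, 1)$ — i.e. making sure the normalization conventions line up so that "$\omega$ with $\alpha_i$-shift zero" really maps to "$1$" rather than to some other explicit unramified character. This is bookkeeping with the support-preserving isomorphisms and the identification of unramified characters of $G_0$ with those of $L^\times$ (via the image of $\varpi_E$), already set up in the paragraph preceding the lemma, so it should be routine, but it is the point where one must be careful. A secondary point requiring a line of justification is that the functor $H_M$ genuinely decomposes as the external tensor product of the single-block functors $H_0$, which follows from the fact that $(\Phi_1^{\otimes e})_\ast$, $T_1$, $M_{\lambda_M}$ and $i_{B_L}^{G_L}|_{T_L}$ are all compatible with the product decomposition $M = G_0^e$, $T_L = (L^\times)^e$.
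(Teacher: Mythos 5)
Your overall plan matches the paper's: push $i_P^G\rho$ through the bottom row of diagram (\ref{D2}) and show that $\rho$ lands on the trivial character of $T_L$, then apply $i_{B_L}^{G_L}$. The reduction to one block and the observation that $\mathrm{c\text{--} Ind}_{T_L^\circ}^{T_L}1$ is a rank-one free $\mathcal{H}(T_L,T_L^\circ,1)$-module (equivalently, that $T_1$ sends a $1$-dimensional module to the corresponding character) are the same steps the paper takes.

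Where you go astray is the justification of the key step: you write that the generator acts by the scalar corresponding to the trivial character \emph{``because we have twisted $\pi$ so that all segments are built from $\omega$ itself with integer shifts.''} That normalization concerns $\pi$ and has no bearing on Lemma~\ref{1.20}: the representation $\rho=\omega\otimes\cdots\otimes\omega$ is built from the fixed supercuspidal $\omega$ defining $\Omega$ in every case, irrespective of how $\pi$ was twisted, so nothing about $\pi$ can explain why $\omega$ corresponds to $1_{L^\times}$. The actual reason is the support-preserving normalization of $\Phi_1$ and multiplicity one: since $\lambda_0$ appears in $\omega|_{J_0}$ with multiplicity one, $\mathrm{Hom}_{J_M}(\lambda_M,\rho)\simeq\mathrm{Hom}_{J_M}(\lambda_M,\lambda_M)$, which is precisely the subspace of $\mathcal{H}(M,J_M,\lambda_M)$ of functions supported on $J_M$. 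Because $\Phi_1^{\otimes e}$ is support preserving, it carries this subspace isomorphically onto the functions in $\mathcal{H}(T_L,T_L^\circ,1)$ supported on $T_L^\circ$, i.e.\ onto $\mathrm{Hom}_{T_L^\circ}(1,1)$, which is the Hecke module of the trivial character. You correctly flagged this as ``the main obstacle'' and ``the point where one must be careful,'' but the reason you offered for why it works out is not the right one, and the paper's argument via multiplicity one plus support preservation is what actually closes the gap.
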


\begin{proof}It follow from the commutative diagram above that: 
	$$\Phi_{\ast} (\mathrm{Hom}_{J}(\lambda,i_{P}^{G}(\rho)))\otimes_{\mathcal{H}(G_{L}, I_{L}, 1)}\mathrm{c\text{--} Ind}_{I_{L}}^{G_{L}} 1$$
	$$= i_{B_{L}}^{G_{L}}((\Phi_{1}^{\otimes e})_{\ast}(\mathrm{Hom}_{J_{M}}(\lambda_{M},\rho))\otimes_{\mathcal{H}(T_{L}, T_{L}^{\circ}, 1)}\mathrm{c\text{--} Ind}_{T_{L}^{\circ}}^{T_{L}} 1)$$
	Observe that the representation $\mathrm{c\text{--} Ind}_{T_{L}^{\circ}}^{T_{L}} 1$ is canonically a rank 1 free $\mathcal{H}(T_{L}, T_{L}^{\circ}, 1)$-module. This observation allows us to simplify the right hand side. Recall that $\rho : = \omega \otimes \ldots \otimes \omega$.
	
	Since $(J, \lambda)$ is a simple type, $\lambda_{M} = \lambda_{0}\otimes \ldots \otimes \lambda_{0}$ ($e$ times), $J_{M} = (J_{0})^{e}$ and  $(J_{0}, \lambda_{0})$ is a maximal simple type for the supercuspidal representation $\omega$, we have:
	$$\mathrm{Hom}_{J_{M}}(\lambda_{M},\rho) \simeq \mathrm{Hom}_{J_{M}}(\lambda_{0}\otimes \ldots \otimes \lambda_{0},\omega | J_{0} \otimes \ldots \otimes \omega| J_{0})$$ 
	$$\simeq \mathrm{Hom}_{J_{0}^{e}}(\lambda_{0}\otimes \ldots \otimes \lambda_{0},\lambda_{0}\otimes \ldots \otimes \lambda_{0})\simeq \mathrm{Hom}_{J_M}(\lambda_M, \lambda_M),$$
	\noindent where have used that $\lambda_{0}$ occurs in $\omega$ with multiplicity one. Now notice that $\mathrm{Hom}_{J_M}(\lambda_M, \lambda_M)$ is the subspace of functions in $\mathcal{H}(M, J_{M}, \lambda_{M})$ supported on $J_M$. By our choice of $\lambda_0$, $\omega$ and $\Phi_{1}$, the support preserving isomorphism $\Phi_{1}^{\otimes e}$ maps this space isomorphically onto the space of functions in $\mathcal{H}(T_{L}, T_{L}^{\circ}, 1)$ supported on $T_{L}^{\circ}$. It follows that $\Phi_{1}^{\otimes e}(\mathrm{Hom}_{J_{M}}(\lambda_{M},\rho)) = \mathrm{Hom}_{T_{L}^{\circ}}(1, 1)$. Thus, the representation $\Phi_{1}^{\otimes e}(\mathrm{Hom}_{J_{M}}(\lambda_{M},\rho))\otimes_{\mathcal{H}(T_{L}, T_{L}^{\circ}, 1)}\mathrm{c\text{--} Ind}_{T_{L}^{\circ}}^{T_{L}} 1$ is a trivial character of $T_{L}$. Then an object $i_{P}^{G} \rho$ in $\mathcal{R}_{\lambda}(G)$ corresponds to an object $i_{B_{L}}^{G_{L}} 1$ in $\mathcal{R}_{1}(G_{L})$.
	
\end{proof}

\begin{lemma}\label{1.21}
	$H:\mathcal{R}_{\lambda}(G) \longrightarrow \mathcal{R}_{1}(G_{L})$ is compatible with twisting by characters
\end{lemma}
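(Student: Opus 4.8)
The plan is to reduce Lemma \ref{1.21} to a single property of the support-preserving isomorphism $\Phi$ of Hecke algebras and then to establish that property by chasing the square (\ref{D1}). Fix an unramified character $\chi$ of $G$. Since unramified characters are trivial on every compact subgroup, $\chi|_J = 1$, and $\pi \mapsto \pi\otimes\chi$ preserves the inertial class $\Omega$ (one has $i_P^G(\rho\otimes\omega)\otimes\chi = i_P^G(\rho\otimes\omega\chi|_M)$ with $\omega\chi|_M$ still unramified), so it is an autoequivalence of $\mathcal{R}_\lambda(G) = \mathcal{R}^\Omega(G)$; the same holds on $\mathcal{R}_1(G_L)$ for unramified characters of $G_L$. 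The key elementary remark is that for any character $\chi$ of $G$ with $\chi|_J = 1$ the rule $\alpha_\chi\colon f \mapsto (g \mapsto \chi(g)f(g))$ is an algebra automorphism of $\mathcal{H}(G,J,\lambda)$: the relation $f(j_1 g j_2) = \check\lambda(j_1)f(g)\check\lambda(j_2)$ is preserved because $\chi|_J=1$, and compatibility with convolution is immediate from multiplicativity of $\chi$. Because the $G$-action on $\pi\otimes\chi$ is the one on $\pi$ scaled by $\chi$, the functor $M_\lambda=\mathrm{Hom}_J(\lambda,-)$ sends $\pi\otimes\chi$ to $M_\lambda(\pi)$ with its $\mathcal{H}(G,J,\lambda)$-module structure twisted by $\alpha_\chi$; dually, $T_\lambda = -\otimes_{\mathcal{H}(G_L,I_L,1)}\cI_{I_L}^{G_L}1$ sends a module twisted by $\alpha_{\chi'}$ to $T_\lambda(-)\otimes\chi'$, since twisting the $G_L$-action on $\cI_{I_L}^{G_L}1$ by $\chi'$ amounts, by the projection formula, to twisting its $\mathcal{H}(G_L,I_L,1)$-module structure by $\alpha_{\chi'}$. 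Granting this, Lemma \ref{1.21} becomes the assertion that $\Phi\circ\alpha_\chi = \alpha_{\chi'}\circ\Phi$ for a suitably matched pair $\chi\leftrightarrow\chi'$.

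I would prove that intertwining by running around the square (\ref{D1}). For the single factor $\Phi_1\colon \mathcal{H}(G_0,J_0,\lambda_0)\simeq\mathcal{H}(L^\times,\mathcal{O}_L^\times,1)$ the claim is a tautology: $\Phi_1$ is support-preserving, carrying the generator supported on $\varpi_E J_0$ to the generator supported on $\varpi_E\mathcal{O}_L^\times$, while an unramified character $\chi_0$ of $G_0$ and the unramified character $\chi_0'$ of $L^\times$ identified with it take the same value on $\varpi_E$ (this is exactly the normalization recalled before (\ref{D1})); hence $\Phi_1\circ\alpha_{\chi_0}=\alpha_{\chi_0'}\circ\Phi_1$, and therefore $\Phi_1^{\otimes e}$ intertwines $\alpha_{\chi|_M}$ with $\alpha_{\chi'|_{T_L}}$ once $\chi|_M=\chi_0\otimes\cdots\otimes\chi_0$ and $\chi'|_{T_L}=\chi_0'\otimes\cdots\otimes\chi_0'$. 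The vertical maps $t_P$ and $t_{B_L}$ are themselves support-preserving morphisms of Hecke algebras compatible with parabolic induction, and the restriction to the Levi of an unramified character of $G$ (resp. $G_L$) is its matching character of that Levi, so $t_P\circ\alpha_{\chi|_M}=\alpha_\chi\circ t_P$ and $t_{B_L}\circ\alpha_{\chi'|_{T_L}}=\alpha_{\chi'}\circ t_{B_L}$. Commutativity of (\ref{D1}) then forces $\Phi\circ\alpha_\chi=\alpha_{\chi'}\circ\Phi$. Transporting this back through diagram (\ref{D2}) yields $H(\pi\otimes\chi)\cong H(\pi)\otimes\chi'$, the assignment $\chi\mapsto\chi'$ being precisely the correspondence between unramified characters of $G$ and of $G_L$ determined by the identifications of unramified characters already in place around (\ref{D1}); that is what "compatible with twisting by characters" means here.

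The bulk of the work is bookkeeping: one must check that "twist the module by $\alpha_\chi$" matches "twist the representation by $\chi$" — and not by $\chi^{-1}$ — consistently through $M_\lambda$, $\Phi_\ast$ and $T_\lambda$, and along the bottom row, and that one and the same labelling of the matched character $\chi'$ is used at each stage, since an inverted sign anywhere would still give compatibility with twisting but relative to the wrong bijection of character groups. The only point that is not purely formal is the compatibility of $t_P$ and $t_{B_L}$ with the twisting automorphisms; I would obtain it either directly from the support-preserving description of these transfer maps in \cite[7.6]{MR1204652}, or by combining the fact that parabolic induction of representations commutes with twisting by a character of the ambient group with the compatibility of $t_P$ with parabolic induction already used in (\ref{D2}).
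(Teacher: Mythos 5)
Your reading of the lemma is subtly but significantly off, and this introduces a real gap. You fix an unramified character $\chi$ of $G$ and show $H(\pi\otimes\chi)\cong H(\pi)\otimes\chi'$. But look at how the paper's proof begins and how the lemma is subsequently invoked in Lemma~\ref{1.22}: the object of interest is $\rho' = (\omega\otimes\chi_1)\otimes\cdots\otimes(\omega\otimes\chi_e) = \rho\otimes\chi$, with $\chi_1,\ldots,\chi_e$ \emph{arbitrary and independent} unramified characters of $G_0$, so $\chi$ is an arbitrary unramified character of the Levi $M$, not the restriction of a character of $G$. Only the diagonal tuples $\chi_1=\cdots=\chi_e$ arise as $\chi|_M$ for $\chi$ a character of $G$. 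Since the paper needs $H(i_P^G(\rho\otimes\chi)) = i_{B_L}^{G_L}(\chi_1\otimes\cdots\otimes\chi_e)$ for arbitrary tuples (this is exactly what gets used to deduce $H(\pi)=Q(\Delta'_1,\ldots,\Delta'_s)$), your proof establishes a strictly weaker statement. Your own reduction to ``$\chi|_M=\chi_0\otimes\cdots\otimes\chi_0$'' makes this restriction explicit, and the device of running around the square (\ref{D1}) to lift the intertwining to the $G$-level is precisely what forces the restriction: for a non-diagonal character of $M$ there is no $\alpha_\chi$ on $\mathcal{H}(G,J,\lambda)$ to intertwine.

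The fix is to stop lifting: the useful statement lives on the bottom row of (\ref{D2}), i.e.\ that the equivalence $\mathcal{R}_{\lambda_M}(M)\to\mathcal{R}_1(T_L)$ carries $\rho\otimes\chi$ to $1\otimes\chi'$ for any unramified $\chi$ of $M$; the required identity at the $G$-level then follows from the commutativity of (\ref{D2}) (the $i_P^G$/$i_{B_L}^{G_L}$ square), without ever needing an automorphism $\alpha_\chi$ of $\mathcal{H}(G,J,\lambda)$. Your observation that $\Phi_1$ intertwines $\alpha_{\chi_0}$ with $\alpha_{\chi_0'}$ does extend multiplicatively to $\Phi_1^{\otimes e}$ with independent characters in each slot, so the core computation is salvageable; but the argument needs to be reorganized around the Levi-level compatibility rather than a $G$-level intertwining, which is in fact how the paper proceeds — by directly computing the $\mathcal{H}(M,J_M,\lambda_M)$-action on $\mathrm{Hom}_{J_M}(\lambda_M,\rho')$ and matching it through $\Phi_1^{\otimes e}$. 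A secondary (smaller) gap in your version of the $G$-level intertwining: $t_P$ and $t_{B_L}$ are injective but not surjective, so commutativity of (\ref{D1}) only shows $\Phi\circ\alpha_\chi$ and $\alpha_{\chi'}\circ\Phi$ agree on the image of $t_P$; you would still need to note they also agree on the finite part $\mathcal{H}(K,J,\lambda)$ (where both $\alpha$'s are the identity because unramified characters are trivial on $K$) and that these two pieces generate the Hecke algebra.
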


\begin{proof}
	Consider the representation $i_{P}^{G}((\omega \otimes \chi_1)\otimes \ldots (\omega \otimes \chi_e))$, where $\chi_1,\ldots,\chi_e$ are some unramified characters of $G_0$. Let $\rho' = (\omega \otimes \chi_1)\otimes \ldots (\omega \otimes \chi_e) = \rho \otimes \chi$, where $\chi$ is an unramified character of $M$. According to \cite[page 591]{MR1643417}  the action of $\mathcal{H}(M, J_{M}, \lambda_{M})$ on $\mathrm{Hom}_{J_M}(\lambda_M, \rho')$ is given by
	$$f\cdot\phi(w) = \int\limits_{M} \rho'(g)\phi(\check{f}(g^{-1})w)dg$$
	\noindent where $f \in \mathcal{H}(M, J_{M}, \lambda_{M})$,  $\phi \in \mathrm{Hom}_{J_M}(\lambda_M, \rho')$ and $w$ is a vector in the underlying vector space of $\lambda_M$. We want to understand the compatibility of this action with twisting and support preserving isomorphisms of Hecke algebras. Since $f$ has compact support, without loss of generality we may assume that $f$ is supported on $J_M m J_M$, for some $m \in M$. The element $m$ is a block-diagonal matrix with $e$ blocks. Without loss of generality we may assume that each block is some power of the uniformizer $\varpi_{E}$. We normalize measures on $G_0$ and on $L^{\times}$ such that $J_0$ and $\mathcal{O}^{\times}$ have both volume 1. Then taking the induced product measures on $M$ and $T_L$, we see that $\int_{J_M}dj=1$ and $\int_{T_{L}^{\circ}}dt=1$. Each block in the matrix $m$ normalizes $J_0$, hence $m$ normalizes $J_M$ and $J_M mJ_M =mJ_M$.  It follows that
	$$f \cdot\phi(w)  = \int\limits_{j \in J_M}  \rho'(mj)\phi(f(mj)^{\vee}w) dj.$$
	By definition we have: 
	$$f(mj)^{\vee} = (f(m)\check{\lambda}_M(j))^{\vee} = \lambda_M(j^{-1}).f(m)^{\vee}.$$
	Moreover $\phi$ is $J_M$-equivariant, thus  
	$$\phi(\lambda_M(j^{-1})f(m)^{\vee}) = \rho'(j^{-1}).\phi(f(m)^{\vee}).$$
	This simplifies the integral above:
	$$\int\limits_{j \in J_M} \rho'(m).\phi(f(m)^{\vee} w) dj=\rho'(m).\phi(f(m)^{\vee}w)=\chi(m).\rho(m).\phi(f(m)^{\vee}w).$$
	The expression above is compatible with the support preserving isomorphism $\Phi_{1}^{\otimes e}$, in a sense that
	$$\Phi_{1}^{\otimes e} (\chi(m).\rho(m).\phi(f(m)^{\vee}\bullet) )= \chi(m). \Phi_{1}^{\otimes e}(\phi)(\Phi_{1}^{\otimes e}(f)(m)^{\vee}\bullet),$$
	\noindent where $m$ is naturally seen as an element of $T_L$ because its diagonal blocks are some powers of the uniformizer $\varpi_{E}$ and $\chi$ is seen as unramified character of $T_L$. This is, of course, compatible with the same computation of the integral replacing $ \mathcal{H}(M, J_{M}, \lambda_{M})$ by $\mathcal{H}(T_{L}, T_{L}^{\circ}, 1)$ and   $\mathrm{Hom}_{J_M}(\lambda_M, \rho')$ by $\mathrm{Hom}_{T_{L}^{\circ}}(1, \chi)$.
\end{proof}

\begin{lemma}\label{1.22}
	The $H:\mathcal{R}_{\lambda}(G) \longrightarrow \mathcal{R}_{1}(G_{L})$	preserves the segments.
\end{lemma}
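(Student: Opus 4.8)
The plan is to compute $H(\pi)$ directly from the Langlands data of $\pi$. Write $\pi = Q(\Delta_1, \dots, \Delta_s)$ with $\Delta_i = [\omega(\alpha_i), \omega(\alpha_i + e_i - 1)]$ listed so that $\Delta_i$ does not precede $\Delta_j$ for $i < j$; then $\pi$ is the unique irreducible quotient of the standard module $\Sigma := i_{P'}^G(\delta_1 \otimes \cdots \otimes \delta_s)$, where $P'$ is the standard parabolic of $G$ with Levi $M' = GL_{re_1}(F) \times \cdots \times GL_{re_s}(F)$ and $\delta_i$ is the essentially square-integrable representation of $GL_{re_i}(F)$ attached to $\Delta_i$. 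I will show that $H(\Sigma)$ is the corresponding standard module on the $GL_e(L)$-side, namely $i_{P'_L}^{G_L}(\delta_1^L \otimes \cdots \otimes \delta_s^L)$, where $P'_L$ is the standard parabolic of $G_L$ with Levi $M'_L = GL_{e_1}(L) \times \cdots \times GL_{e_s}(L)$ and $\delta_i^L$ is the essentially square-integrable representation attached to a segment $\Delta_i^L$ of unramified characters of $L^\times$ of length $e_i$. Since $H$ is an exact equivalence of categories it carries the unique irreducible quotient of $\Sigma$ to the unique irreducible quotient of $H(\Sigma)$; provided the $\Delta_i^L$ are again listed in the Langlands order, this yields $H(\pi) = Q(\Delta_1^L, \dots, \Delta_s^L)$, which is the desired preservation of segments.

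The first step is to produce the analogue of diagram (\ref{D2}) with the intermediate Levi $M'$ in place of $G$. Each factor $GL_{re_i}(F)$ inherits from $(J, \lambda)$ a simple type, and the construction recalled before (\ref{D1}) --- applied to $GL_{re_i}(F)$, to $GL_{e_i}(L)$, and to the Iwahori subgroup and torus of the latter --- produces an equivalence $H_i \colon \mathcal{R}_{\lambda_{(i)}}(GL_{re_i}(F)) \to \mathcal{R}_1(GL_{e_i}(L))$ of the same nature as $H$, sitting in a diagram of the shape of (\ref{D2}). Taking external tensor products over $i$ gives an equivalence $H_{M'} = H_1 \otimes \cdots \otimes H_s \colon \mathcal{R}_{\lambda_{M'}}(M') \to \mathcal{R}_1(M'_L)$. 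I would then deduce the compatibility $H \circ i_{P'}^G \cong i_{P'_L}^{G_L} \circ H_{M'}$ from \cite[Corollary 8.4]{MR1643417} (the transitivity of the maps $t_P$) together with the \emph{functoriality in the intermediate Levi} of the middle square of (\ref{D1}), i.e. the fact that the support-preserving Hecke-algebra isomorphisms of \cite[Theorem 7.6.20]{MR1204652} and \cite[Corollary 8.4]{MR1643417} are compatible with passing from $G$ to an intermediate standard Levi. Making this propagation through an arbitrary intermediate Levi precise is, I expect, the main obstacle: diagrams (\ref{D1})--(\ref{D2}) are stated only for the pair $(M, G)$, and one has to verify that the construction behaves well for the pairs $(M, M')$ and $(M', G)$ --- which it should, since the cited results of \cite{MR1204652} and \cite{MR1643417} are formulated for general types and parabolic subgroups.

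It then remains to treat a single segment. Here I claim $H_i(\delta_i) = \delta_i^L$, where $\delta_i^L$ is the essentially square-integrable representation of $GL_{e_i}(L)$ attached to the segment $\Delta_i^L = [\mathbf{1}(\alpha_i'), \mathbf{1}(\alpha_i' + e_i - 1)]$, with $\mathbf{1}(\beta) := \iota^{-1}(|\cdot|_L^{\iota(\beta)})$ an unramified character of $L^\times$, and where $\alpha \mapsto \alpha'$ is the translation of the parameter space induced by the identification of unramified characters of $GL_r(F)$ with those of $L^\times$ (this identification being normalized, via the support-preserving isomorphisms of \cite[Theorem 7.6.20]{MR1204652}, so that $|\det|$ on $GL_r(F)$ corresponds to $|\cdot|_L$ on $L^\times$; in particular $\Delta_i^L$ is a genuine segment of length $e_i$). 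Indeed $\delta_i$ is the unique irreducible quotient of the representation of $GL_{re_i}(F)$ induced from $\omega(\alpha_i) \otimes \omega(\alpha_i + 1) \otimes \cdots \otimes \omega(\alpha_i + e_i - 1)$ along the standard parabolic with Levi $GL_r(F)^{e_i}$, and by Lemmas \ref{1.20} and \ref{1.21} applied to $GL_{re_i}(F)$ this induced representation is carried by $H_i$ to the representation of $GL_{e_i}(L)$ induced from $\mathbf{1}(\alpha_i') \otimes \mathbf{1}(\alpha_i' + 1) \otimes \cdots \otimes \mathbf{1}(\alpha_i' + e_i - 1)$ along the Borel; applying the exact equivalence $H_i$ to the surjection onto $\delta_i$ then identifies $H_i(\delta_i)$ with the unique irreducible quotient of this representation, which is precisely $\delta_i^L$. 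Substituting this into the first paragraph gives $H(\Sigma) = i_{P'_L}^{G_L}(\delta_1^L \otimes \cdots \otimes \delta_s^L)$. Finally, since $\alpha \mapsto \alpha'$ is a translation it preserves the length of segments as well as the relations \emph{precedes} and \emph{linked} between them; hence $(\Delta_1^L, \dots, \Delta_s^L)$ is again in the Langlands order, $H(\Sigma)$ has unique irreducible quotient $Q(\Delta_1^L, \dots, \Delta_s^L)$, and therefore $H(\pi) = Q(\Delta_1^L, \dots, \Delta_s^L)$. This is the claim that $H$ preserves segments.
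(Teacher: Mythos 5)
Your proposal follows essentially the same route as the paper: pass to the standard module $i_{\tilde P}^G(Q(\Delta_1)\otimes\cdots\otimes Q(\Delta_s))$ over the intermediate Levi $\tilde M = GL_{re_1}(F)\times\cdots\times GL_{re_s}(F)$, establish a commutative diagram $H\circ i_{\tilde P}^G \cong i_{\tilde P_L}^{G_L}\circ H_{\tilde M}$ analogous to~(\ref{D2}), handle a single segment via Lemmas~\ref{1.20} and~\ref{1.21}, and conclude by uniqueness of the Langlands quotient. The step you flag as ``the main obstacle'' --- propagating the support-preserving isomorphism and the diagram construction through an arbitrary intermediate standard Levi --- is treated by the paper with comparable brevity (``In the same way as for diagram~(\ref{D2})''), so your plan and the paper's argument are aligned in both content and level of detail.
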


\begin{proof}
	
	We know that $\pi= Q(\Delta_{1},\ldots,\Delta_{s})$ is an irreducible subquotient of $i_{P}^{G}((\omega \otimes \chi_1)\otimes \ldots (\omega \otimes \chi_e))$, where $\chi_1,\ldots,\chi_e$ are some unramified characters of $G_0$.
	Then by the equivalence of categories described in the diagram (\ref{D2}), $H(\pi)$ is an irreducible subquotient of $H(i_{P}^{G}((\omega \otimes \chi_1)\otimes \ldots \otimes(\omega \otimes \chi_e)))$. However by Lemma \ref{1.21}, we have $H(\pi)$ is an irreducible subquotient of $H(i_{P}^{G}((\omega \otimes \chi_1)\otimes \ldots \otimes(\omega \otimes \chi_e)))=i_{B_{L}}^{G_{L}} (\chi_1 \otimes \ldots \otimes \chi_e)$. Let now $\Delta = [\omega(\alpha),\omega(\alpha+e-1)]$, a segment in $G$, where $\alpha$ is some scalar. Then the commutative diagram above shows that the $G$-representation $i_{P}^{G}(\Delta)$ corresponds to the $G_{L}$-representation $H(i_{P}^{G}(\Delta))=i_{B_{L}}^{G_{L}}(\Delta_{L})$, where $\Delta_{L} = [1(\alpha),1(\alpha+e-1)]$ is a segment in $G_{L}$ and $1$ is the trivial character of $L^{\times}$. We know that  $i_{P}^{G}(\Delta)$ and $i_{B_L}^{G_L}(\Delta_L)$, admit unique irreducible quotients $Q(\Delta)$ and $Q(\Delta_{L})$ respectively, so $H(Q(\Delta))= Q(\Delta_{L})$. Let $\tilde{P}=\tilde{M}\tilde{N}$, with Levi subgroup $\tilde{M}=GL_{re_1}(F) \times \ldots \times GL_{re_s}(F)$ and unipotent radical $\tilde{N}$, be a standard parabolic containing $P$ which is adapted to the segment decomposition of $\pi=Q(\Delta_{1},\ldots,\Delta_{s})$, so that $\pi$ is a quotient of $i_{\tilde{P}}^G(Q(\Delta_{1})\otimes\ldots\otimes Q(\Delta_s))$. Define $\tilde{M}_L = GL_{e_1}(L) \times \ldots \times GL_{e_s}(L)$ a Levi subgroup of a standard parabolic $\tilde{P}_L$ such that $B_L \subset\tilde{P}_L \subset G_L$.  In the same way as for diagram (\ref{D2}), we get a commutative diagram:
	$$\xymatrix{
		\mathcal{R}_{\lambda}(G)  \ar[r]^-{H}  &\mathcal{R}_{1}(G_{L})\\
		\mathcal{R}_{\lambda_{M}}(\tilde{M}) \ar[u]^{i_{\tilde{P}}^G} \ar[r]_-{H_{\tilde{M}}}  &\mathcal{R}_{1}(\tilde{M}_{L}),\ar[u]_-{i_{\tilde{P}_L}^{G_L}} } $$
	where the horizontal arrows are equivalences of categories, constructed in a similar fashion to the diagram (\ref{D2}) replacing $P$ by $\tilde{P}$, $B_L$ by $\tilde{P}_L$ and so on... Moreover by construction the functor $H_{\tilde{M}}=H_1 \times \ldots \times H_s$ is a product of functors $H_i$, where each individual functor $H_i : \mathcal{R}_{\lambda}(GL_{re_i}(F))  \longrightarrow \mathcal{R}_{1}(GL_{e_i}(L))$ is constructed in the same way as $H$, with $G$ replaced by $GL_{re_i}(F)$. Gathering all the results above we may write:
	$$H\circ i_{\tilde{P}}^G(Q(\Delta_{1})\otimes\ldots\otimes Q(\Delta_s))= i_{\tilde{P}_L}^{G_L}(H_{\tilde{M}}(Q(\Delta_{1})\otimes\ldots\otimes Q(\Delta_s)))$$
	$$=i_{\tilde{P}_L}^{G_L}(Q(\Delta'_{1})\otimes\ldots\otimes Q(\Delta'_s)),$$
	where $\Delta'_{i}=[1(\alpha_{i}),1(\alpha_{i}+e_{i}-1)]$ for all $i$. Hence we have an equality $H(\pi)=Q(\Delta'_{1},\ldots,\Delta'_{s})$, since both representations are the Langlands quotient of $H\circ i_{\tilde{P}}^G(Q(\Delta_{1})\otimes\ldots\otimes Q(\Delta_s))=i_{\tilde{P}_L}^{G_L}(Q(\Delta'_{1})\otimes\ldots\otimes Q(\Delta'_s))$.
\end{proof}

According to the description of Hecke algebras in section (5.6) of \cite{MR1204652} the isomorphism of Hecke algebras $\Phi:  \mathcal{H}(G, J, \lambda) \simeq \mathcal{H}(G_{L}, I_{L}, 1) $ is support preserving,  in the sense that $\mathrm{supp}(\Phi(f))=I_L.\mathrm{supp}(f).I_L$. We have also a natural isomorphism between $\mathcal{H}(K_{L}, I_{L}, 1)= \left\lbrace f \in \mathcal{H}(G_{L}, I_{L}, 1) \mid \mathrm{supp}(f) \subset K_{L}\right \rbrace$ and $\mathcal{H}(K, J, \lambda)= \left\lbrace f \in \mathcal{H}(G, J, \lambda) \mid \mathrm{supp}(f) \subset K\right \rbrace$. We have then the following commutative diagram:
$$\xymatrix{
	\mathcal{H}(G, J, \lambda)  \ar[r]^{\Phi} &\mathcal{H}(G_{L}, I_{L}, 1)\\
	\mathcal{H}(K, J, \lambda) \ar[u] \ar[r]^{\Phi} &\mathcal{H}(K_{L}, I_{L}, 1) \ar[u]}$$
As for diagram (\ref{D2}), the diagram above induces:
$$\xymatrix{
	\mathcal{R}_{\lambda}(G)  \ar[r]^-{M_{\lambda}} &\mathcal{H}(G, J, \lambda)\text{-Mod}  \ar[r]^{\Phi_{\ast}} &\mathcal{H}(G_{L}, I_{L}, 1)\text{-Mod}\ar[r]^-{T_{\lambda}} &\mathcal{R}_{1}(G_{L})\\
	\mathcal{R}_{\lambda}(K) \ar[u]^{\mathrm{c\text{--} Ind}_{K}^{G}} \ar[r]_-{M_{\lambda}} &\mathcal{H}(K, J, \lambda)\text{-Mod}  \ar[r]^{\Phi_{\ast}} &\mathcal{H}(K_{L}, I_{L}, 1)\text{-Mod} \ar[r]_-{T_{K_L}} &\mathcal{R}_{1}(K_{L})\ar[u]_-{\mathrm{c\text{--} Ind}_{K_{L}}^{G_{L}}}}  $$
\noindent where $T_{K_L}=\bullet \otimes_{\mathcal{H}(K_{L}, I_{L}, 1)}\mathrm{c\text{--} Ind}_{I_{L}}^{K_{L}} 1$.

If we denote the composition of all the top horizontal arrow by $H$ and the composition of all the bottom horizontal arrow by $H_{K}$, then $H(\mathrm{c\text{--} Ind}_{K}^{G} \sigma) = \mathrm{c\text{--} Ind}_{K_{L}}^{G_{L}} H_{K}(\sigma)$. 

The functor $\mathrm{Ind}_{J_{max}}^{K}(\kappa_{max} \otimes \cdot) : \mathscr{M}_{\nu}(\overline{G}) \to \mathcal{R}_{\lambda}(K) $ is an equivalence of categories according the discussion above Proposition 11 in Section 5 \cite{MR1728541}. We will denote this functor by $"\kappa_{max}"$. Let $\sigma_{\tilde{\mathcal{P}}}(\lambda)  = \mathrm{Ind}_{J_{max}}^{K}(\kappa_{max} \otimes \sigma(\tau,\tilde{\mathcal{P}}))$. 

\begin{lemma}\label{1.23}
	We have $H_{K}(\sigma_{min}(\lambda))=\mathrm{St}$, where $\mathrm{St}$ denotes the inflation of Steinberg representation of $GL_{n}$ over a finite field.
\end{lemma}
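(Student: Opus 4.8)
The plan is to unwind the definition of $\sigma_{min}(\lambda)$ through the chain of equivalences established above and identify the image explicitly. Recall that $\sigma_{min}(\lambda) = \mathrm{Ind}_{J_{max}}^{K}(\kappa_{max}\otimes \sigma(\tau,\tilde{\mathcal{P}}^{min}))$, where $\tilde{\mathcal{P}}^{min}$ is the minimal partition-valued function; since the order on partitions used here is the opposite of the usual one, the minimal partition of $e$ is $(e)$, so $\sigma(\tau,\tilde{\mathcal{P}}^{min})$ is precisely the Steinberg-type representation $st(\tau,e)$ of $\overline{G}$. First I would make this identification precise using the conventions fixed just before Lemma~\ref{1.15}.

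Next I would use the factorization $H_K = \mathrm{c\text{--}Ind}\text{-type composition}$ through the finite-group functor $\overline{H}_e$. Concretely, the functor $"\kappa_{max}"$ identifies $\mathcal{R}_\lambda(K)$ with $\mathscr{M}_\nu(\overline{G})$, and under this identification $\sigma_{min}(\lambda)$ corresponds to $\sigma(\tau,\tilde{\mathcal{P}}^{min}) = st(\tau,e)$. On the $G_L$ side, the analogous functor "$\kappa_{max}$" for the Iwahori type $(I_L,1)$ in $K_L = GL_e(\mathcal{O}_L)$ identifies $\mathcal{R}_1(K_L)$ with $\mathscr{M}_1(\overline{G}_e)$, where $\mathrm{St}$ (the inflation of the Steinberg of $GL_e(k_L)$) corresponds to $st(1,e)$. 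The key point is that the diagram relating $H_K$ to $\overline{H}_e$ commutes — this should follow from the compatibility of the Hecke algebra isomorphisms $\Phi$ with the finite-level isomorphisms (the commutative square with $\mathcal{H}(K,J,\lambda)\cong\mathcal{H}(K_L,I_L,1)$ displayed just above), together with the fact that $\overline{H}_e$ is built from exactly the Hecke-module functors $\Psi'$ and the tensor functors over $\mathrm{Ind}_{\overline{P}}^{\overline{G}}\sigma$. So I would first set up the commuting square
$$\xymatrix{
\mathcal{R}_\lambda(K) \ar[r]^{H_K} & \mathcal{R}_1(K_L)\\
\mathscr{M}_\nu(\overline{G}) \ar[u]^{"\kappa_{max}"} & \mathscr{M}_1(\overline{G}_e) \ar[l]^{\overline{H}_e} \ar[u]_{"1"}
}$$
and check it commutes by tracing a generator through both paths.

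Granting that square, the computation finishes quickly: $H_K(\sigma_{min}(\lambda)) = H_K("\kappa_{max}"(st(\tau,e)))$, and by Lemma~\ref{1.15} we have $st(\tau,e) = \overline{H}_e(st(1,e))$ (this is exactly the remark immediately after Lemma~\ref{1.13}, taking $\mathcal{P}$ to be the partition $(e)$ whose associated parabolic is the full $\overline{G}_e$, so $\sigma(1,(e)) = st(1,e)$). Hence $H_K(\sigma_{min}(\lambda)) = "1"(\overline{H}_e(st(1,e))\text{'s source}) = "1"(st(1,e)) = \mathrm{St}$ by the commutativity of the square. I would present this as a short paragraph once the square is in place.

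The main obstacle will be verifying the commutativity of that square — i.e., checking that the "big" equivalence $H_K$ from the Hecke-algebra side genuinely restricts, under the Schneider--Zink functors $"\kappa_{max}"$, to the "small" finite-group functor $\overline{H}_e$. This requires unraveling several layers: the identification $\mathcal{H}(K,J,\lambda)\cong\mathcal{H}(K_L,I_L,1)$ is support preserving, so it matches the subalgebras corresponding to the finite Hecke algebras $\mathcal{H}(\overline{G},\sigma)$ and $\mathcal{H}_W = \mathcal{H}(\overline{G}_e,1)$; one then needs that the tensor-induction functors $T_{K_L}$ and $M_\lambda$ at the $K$-level, composed with $"\kappa_{max}"$, reproduce the three composed functors defining $\overline{H}_e$. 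This is essentially bookkeeping with the explicit form of the isomorphism $\Psi$ of Section~\ref{M.8} and the results of \cite{MR1728541, MR1204652}, but it is the step where all the pieces have to be aligned carefully; everything after that is immediate from Lemma~\ref{1.15}.
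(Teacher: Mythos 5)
Your proposal is correct and follows essentially the same route as the paper: identify $\sigma_{min}(\lambda)$ as $"\kappa_{max}"(st(\tau,e))$, use Lemma~\ref{1.15} (equivalently the remark after Lemma~\ref{1.13}) to get $\overline{H}_e(st(1,e))=st(\tau,e)=\sigma(\tau,\tilde{\mathcal{P}}_{min})$, and finish via the commuting square relating $H_K$ to $\overline{H}_e$ through the two $"\kappa_{max}"$-type equivalences. The paper asserts the commutativity of that square without further verification, exactly the step you flag as the main remaining bookkeeping, so the two proofs coincide in both structure and the point left implicit.
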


\begin{proof} By Lemma \ref{1.15} we have that $\overline{H}_e(st(1,e)) = \sigma(\tau,\tilde{\mathcal{P}}_{min})$. To conclude, use following commutative diagram:$$\xymatrix{
		\mathscr{M}_1(\overline{G}_e) \ar[d]^{"\kappa_{max}"} \ar[r]^{\overline{H}_e} &\mathscr{M}_{\nu}(\overline{G}) \ar[d]^{"\kappa_{max}"}\\
		\mathcal{R}_1(K_L)  \ar[r]^{H_K^{-1}} &\mathcal{R}_{\lambda}(K) \quad,}$$
	\noindent where every arrow is an equivalence of categories.
\end{proof}

\section{Generic representations}\label{M.11}

In this section we will use the results proven above to deduce our main theorem.

%For any subgroup $H$ of $G$ we will denote by $\textbf{1}_H$ the trivial representation of $H$.

\begin{thm}\label{1.18}
	Let $\pi$ be an absolutely irreducible representation in the Berstein component $\Omega$, then $\mathrm{Hom}_{K}(\sigma_{min}(\lambda), \pi) \neq 0$ if and only if $\pi$ is generic.
\end{thm}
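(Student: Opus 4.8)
The plan is to reduce the statement, via the equivalence of categories $H$ constructed in Section~\ref{M.8}, to the Iwahori case for $GL_e(L)$, and then to invoke the known description of generic representations in the Iwahori case. More precisely, I would proceed as follows.

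\medskip

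\noindent\textbf{Step 1: Reduction to simple types.} By Bushnell--Kutzko's theory of types for $GL_n$ (\cite{MR1711578}, \cite{MR1204652}), the Hecke algebra $\mathcal{H}(G,\lambda)$ for an arbitrary $\Omega$-type is isomorphic to a tensor product of Hecke algebras attached to simple types, and correspondingly the Bernstein component $\Omega$ decomposes so that genericity, the functor $H$, and the representation $\sigma_{min}(\lambda)$ all factor through the tensor-product structure (using that $\sigma_{min}$ of a tensor product is the ``tensor product'' of the $\sigma_{min}$'s, and that an irreducible representation of a product of general linear groups is generic iff each factor is). This lets me assume $(J,\lambda)$ is a simple type, which is exactly the situation set up in Section~\ref{M.8}.

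\medskip

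\noindent\textbf{Step 2: Transport genericity through $H$.} I would show that, under the equivalence of categories $H : \mathcal{R}_\lambda(G) \to \mathcal{R}_1(G_L)$ from Lemma~\ref{1.20}--\ref{1.22}, an irreducible $\pi$ in $\Omega$ is generic if and only if $H(\pi)$ is a generic object of $\mathcal{R}_1(G_L)$, i.e. an Iwahori-spherical generic representation of $GL_e(L)$. This is the heart of the matter. The key inputs are already in place: Lemma~\ref{1.22} says $H$ preserves the Zelevinsky segment data, writing $\pi = Q(\Delta_1,\dots,\Delta_s)$ with $\Delta_i = [\omega(\alpha_i),\omega(\alpha_i+e_i-1)]$ and $H(\pi)=Q(\Delta'_1,\dots,\Delta'_s)$ with $\Delta'_i=[1(\alpha_i),1(\alpha_i+e_i-1)]$. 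By the Zelevinsky/Bernstein--Zelevinsky classification, a Langlands (or Zelevinsky) quotient $Q(\Delta_1,\dots,\Delta_s)$ attached to supercuspidal support built from a single supercuspidal $\omega$ of $GL_r(F)$ (up to unramified twist) is generic if and only if the segments $\Delta_1,\dots,\Delta_s$ are pairwise unlinked — a condition that depends only on the scalars $\alpha_i$ and the lengths $e_i$, hence is visibly preserved by $H$. So $\pi$ generic $\iff$ the $\Delta_i$ are unlinked $\iff$ the $\Delta'_i$ are unlinked $\iff$ $H(\pi)$ generic.

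\medskip

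\noindent\textbf{Step 3: The Iwahori case on the $K$-side.} It remains to prove the statement for $GL_e(L)$ with the Iwahori type, relating $\mathrm{Hom}_{K_L}(\mathrm{St}, -)$ to genericity. Here I would use Lemma~\ref{1.23}, which identifies $H_K(\sigma_{min}(\lambda))$ with the inflation $\mathrm{St}$ of the Steinberg representation, together with the compatibility $H(\mathrm{c\text{--}Ind}_K^G \sigma) = \mathrm{c\text{--}Ind}_{K_L}^{G_L} H_K(\sigma)$ recorded just before Lemma~\ref{1.23}; this reduces $\mathrm{Hom}_K(\sigma_{min}(\lambda),\pi) = \mathrm{Hom}_{G}(\mathrm{c\text{--}Ind}_K^G\sigma_{min}(\lambda),\pi)$ to $\mathrm{Hom}_{G_L}(\mathrm{c\text{--}Ind}_{K_L}^{G_L}\mathrm{St}, H(\pi)) = \mathrm{Hom}_{K_L}(\mathrm{St}, H(\pi))$. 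For the Iwahori-spherical representations of $GL_e(L)$ the required equivalence ``contains $\mathrm{St}$ $\iff$ generic'' is exactly the content of the work of \cite{MR1915088} (as noted in the introduction; this is also where Rogawski's \cite{MR782228} classification of simple modules over the Iwahori--Hecke algebra of type $A$ enters in the thesis approach), so I would cite that. Combining Steps 2 and 3 gives $\mathrm{Hom}_K(\sigma_{min}(\lambda),\pi)\neq 0 \iff H(\pi)$ generic $\iff \pi$ generic.

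\medskip

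\noindent\textbf{Main obstacle.} The delicate point is Step~2, and within it the bookkeeping that $H$ genuinely preserves \emph{genericity} and not merely the abstract segment labels: one must be careful that the normalizations (the shift by $\omega$ versus the shift by $|\det|$, the matching of unramified twists of $\omega$ on $GL_r(F)$ with unramified characters of $L^\times$, the ordering condition ``$\Delta_i$ does not precede $\Delta_j$'') are transported correctly, so that the combinatorial ``unlinked'' criterion really is invariant under $H$. Lemmas~\ref{1.20}, \ref{1.21} and \ref{1.22} are designed precisely to handle this, so the remaining work is to assemble them and to state cleanly the Zelevinsky genericity criterion in the form needed. The multiplicity-one refinement of Theorem~\ref{p} (that the Hom-space has dimension exactly $1$ in the generic case) then follows from the multiplicity-one statement for $\mathrm{St}$ in $\mathrm{Ind}_{I_L}^{K_L}1$ recalled in the introduction, transported back through $H_K$.
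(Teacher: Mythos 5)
Your proposal is correct and follows essentially the same route as the paper: reduce to simple types via the Bushnell--Kutzko tensor-product decomposition, transport $\sigma_{min}(\lambda)$ and genericity through the equivalence $H$ (using Lemmas~\ref{1.22} and \ref{1.23}), and conclude in the Iwahori case for $GL_e(L)$ via \cite{MR1915088}. The only place your write-up is sketchier than the paper is Step~1: the reduction is not merely a statement about representations of a product group, since $\pi$ lives on $G=GL_n(F)$; what the paper actually uses is that $\pi \simeq i_P^G(\pi_1\otimes\cdots\otimes\pi_s)$ with pairwise disjoint supercuspidal supports (Bushnell--Kutzko, Theorem 8.5.1), the triviality of $\sigma_{min}(\lambda)$ on $K\cap N$ together with $\sigma_{min}(\lambda)|_{K\cap M}\simeq \sigma_1\otimes\cdots\otimes\sigma_s$, and then a Mackey/Frobenius computation to identify $\mathrm{Hom}_K(\sigma_{min}(\lambda),\pi)$ with $\bigotimes_i\mathrm{Hom}_{K_i}(\sigma_i,\pi_i|_{K_i})$ — so the reduction is a concrete Hom-space calculation, not a formal factorization of $H$ through a product.
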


\begin{proof} Let us first deal with a particular case before the general case.
	
	\noindent \textbf{1. Simple type case.} Assume that $\pi$ contains a simple type $(J,\lambda)$. In this case $\Omega = [GL_{r}(F)^{e}, \omega \otimes \ldots \otimes \omega]_{G}$ where the tensor product $\rho : = \omega \otimes \ldots \otimes \omega$  is taken $e$ times and $\omega$ is a supercuspidal representation of $GL_{r}(F)$.

	Recall that representation $\pi=Q(\Delta_{1},\ldots,\Delta_{s})$  such that for $i < j$ the segment $\Delta_{i}$ does not precede $\Delta_{j}$.  If $s=1$ then $\pi$ is generic and contains $\sigma_{min}(\lambda)$. Assume that $s > 1$.

	The functor $H:\mathcal{R}_{\lambda}(G) \longrightarrow \mathcal{R}_{1}(G_{L})$ from above is an equivalence of categories. To avoid notational overload let $\sigma : = \sigma_{min}(\lambda)$, then 
	$$\mathrm{Hom}_{G}(\mathrm{c\text{--} Ind}_{K}^{G} \sigma,\pi)= \mathrm{Hom}_{G_{L}}(H(\mathrm{c\text{--} Ind}_{K}^{G} \sigma),H(\pi))$$

	Recall the following commutative diagram:
	$$\xymatrix{
		\mathcal{R}_{\lambda}(G)  \ar[r]^-{M_{\lambda}} &\mathcal{H}(G, J, \lambda)\text{-Mod}  \ar[r]^{\Phi_{\ast}} &\mathcal{H}(G_{L}, I_{L}, 1)\text{-Mod}\ar[r]^-{T_{\lambda}} &\mathcal{R}_{1}(G_{L})\\
		\mathcal{R}_{\lambda}(K) \ar[u]^{\mathrm{c\text{--} Ind}_{K}^{G}} \ar[r]_-{M_{\lambda}} &\mathcal{H}(K, J, \lambda)\text{-Mod}  \ar[r]^{\Phi_{\ast}} &\mathcal{H}(K_{L}, I_{L}, 1)\text{-Mod} \ar[r]_-{T_{K_L}} &\mathcal{R}_{1}(K_{L})\ar[u]_-{\mathrm{c\text{--} Ind}_{K_{L}}^{G_{L}}}}  $$
	\noindent where $T_{K_L}=\bullet \otimes_{\mathcal{H}(K_{L}, I_{L}, 1)}\mathrm{c\text{--} Ind}_{I_{L}}^{K_{L}} 1$.
	
	Recall that $H$ is the composition of all the top horizontal arrows and $H_K$ is the composition of all the bottom horizontal arrows, then $H(\mathrm{c\text{--} Ind}_{K}^{G} \sigma) = \mathrm{c\text{--} Ind}_{K_{L}}^{G_{L}} H_{K}(\sigma)$. By Lemma \ref{1.23} we have $H_{K}(\sigma)=\mathrm{St}$, where $\mathrm{St}$ denotes the inflation of Steinberg representation of $GL_{n}$ over a finite field. Moreover by Lemma \ref{1.22}, we have $H(\pi)=Q(\Delta'_{1},\ldots,\Delta'_{s})$. Therefore: 
	$$\mathrm{Hom}_{K}(\sigma,\pi|K)=\mathrm{Hom}_{G}(\mathrm{c\text{--} Ind}_{K}^{G} \sigma,\pi)= \mathrm{Hom}_{G_{L}}(H(\mathrm{c\text{--} Ind}_{K}^{G} \sigma),H(\pi))$$
	$$=\mathrm{Hom}_{G_{L}}(\mathrm{c\text{--} Ind}_{K_{L}}^{G_{L}} H_{K}(\sigma), Q(\Delta'_{1},\ldots,\Delta'_{s}))$$ $$=\mathrm{Hom}_{K_{L}}( \mathrm{St},Q(\Delta'_{1},\ldots,\Delta'_{s})|K_{L}).$$
	According to \cite[Theorem 9.7]{MR584084} $\pi = Q(\Delta_{1},\ldots,\Delta_{s})$ is generic if and only if no two segments $\Delta_i$ are linked. By construction the relative positions of the segments $\Delta_i$ are the same as of the segments $\Delta'_i$. Therefore no two segments $\Delta_i$ are linked if and only if no two segments $\Delta'_i$ are linked. It follows that $Q(\Delta'_{1},\ldots,\Delta'_{s})$ is generic if and only if $\pi$ is generic and $\mathrm{Hom}_{K_{L}}(\mathrm{St},Q(\Delta'_{1},\ldots,\Delta'_{s})|K_{L}) \neq 0$ if and only if $\mathrm{Hom}_{K}(\sigma,\pi|K) \neq 0$ by the equality above. So we are reduced to consider the case when $(J,\lambda)=(I,1)$. However this was proven in  \cite[section 7.2]{MR1915088}.
	
	\noindent \textbf{2. Semi-simple type case (general case).} Let now $\lambda$ be some general semi-simple type. The second part of the Main Theorem of section 8 in \cite{MR1643417} gives a support preserving Hecke algebra isomorphism $j :\mathcal{H}(\overline{M}, \lambda_{M}) \rightarrow \mathcal{H}(G, \lambda)$ (here $\overline{M}$ is the unique Levi subgroup of $G$ which contains the $N_{G}(M)$-stabilizer of the inertia class $D=[M,\rho]_M$ and is minimal for this property), and section 1.5 of \textit{op. cit.} gives a tensor product decomposition $\mathcal{H}(\overline{M}, \lambda_{M}) = \mathcal{H}_{1} \otimes_{C}\ldots\otimes_{C}\mathcal{H}_{s}$, where $\mathcal{H}_{i} = \mathcal{H}(G_{i}, J_{i}, \lambda_{i})$ is an affine Hecke algebras of type A and $(J_{i}, \lambda_{i})$ is some simple type with $G_{i}$ some general linear group over a $p$-adic field. 
	
	Let $M=\prod_{i=1}^{s} GL_{n_i}(F)$ be a standard block-diagonal Levi subgroup of a standard parabolic $P=MN$, such that $K \cap M = \prod_{i=1}^{s} K_{i}$, where $K_{i}$ is a maximal compact subgroup of $GL_{n_i}(F)$. By definition, see the end of section 6 in \cite{MR1728541}, the restriction of the $K$-representation $\sigma:=\sigma_{min}(\lambda)$ to $K \cap N$ is trivial, and $\sigma|K\cap M \simeq \sigma_{1}\otimes\ldots\otimes \sigma_{s}$ where $\sigma_{i} := \sigma_{\mathcal{P}_{i}^{min}}(\lambda_{i})$ with obvious notations.
	
	According to  \cite[Theorem (8.5.1)]{MR1204652} the irreducible representation $\pi$ is of the form
	$$\pi \simeq i_P^G(\pi_{1} \otimes \ldots \otimes \pi_{s}),$$
	\noindent such that $\pi_{i}$ is an irreducible representation of $G_{i}$ and contains the simple type $(J_{i}, \lambda_{i})$. Moreover the supercuspidal support of $\pi_{i}$ is disjoint from the supercuspidal support of $\pi_{j}$ for $i \neq j$. Then
	$$\mathrm{Hom}_{K}(\sigma,\pi) = \mathrm{Hom}_{K}(\sigma,\mathrm{Ind}_{K\cap P}^K(\pi_{1}|K_{1}\otimes\ldots\otimes \pi_{s}|K_{s}))$$ $$=\mathrm{Hom}_{K \cap P}(\sigma| K\cap P,\pi_{1}|K_{1}\otimes\ldots\otimes \pi_{s}|K_{s})$$
	$$ = \mathrm{Hom}_{K \cap M}(\sigma_{1}\otimes\ldots\otimes \sigma_{s},\pi_{1}|K_{1}\otimes\ldots\otimes \pi_{s}|K_{s}),$$
	where the first equality is obtained from the Mackey formula and Iwasawa decomposition, the second equality follows from Frobenius reciprocity, where $\pi_{1}|K_{1}\otimes\ldots\otimes \pi_{s}|K_{s}$ denotes the inflation of the representation of $K\cap M$ to $K\cap P$, and the last  equality is obtained by taking the coinvariants of $\sigma|K\cap P$ with respect to $K \cap N$. Hence $\mathrm{Hom}_{K}(\sigma,\pi)$ is non zero if and only $\mathrm{Hom}_{K_{i}}(\sigma_{i},\pi_{i}|K_{i})$ are non zero for all $i$. However, $\mathrm{Hom}_{K_{i}}(\sigma_{i},\pi_{i}|K_{i})$ are non zero for all $i$  if and only if $\pi_i$ are generic for all $i$ (by the simple type case for each $i$). Finally $\pi_i$ are generic for all $i$ if and only if $\pi$ is generic, because  the supercuspidal supports of $\pi_i$ are pairwise disjoint and all segment are pairwise disjoint. This finishes the proof.
\end{proof}
We may now deduce the multiplicity one statement:
\begin{lemma}\label{4.34}
	We have $\dim \mathrm{Hom}_K(\sigma_{min}(\lambda),\pi)=1$, for $\pi$ an irreducible generic representation of $G$ in $\Omega$.
\end{lemma}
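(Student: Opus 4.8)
The plan is to upgrade the non-vanishing statement of Theorem~\ref{1.18} to an exact multiplicity computation by tracking the dimension of the relevant Hom-spaces through the same equivalences of categories used in that proof. Since $\pi$ is irreducible generic in $\Omega$, it is in particular an object of $\mathcal{R}_\lambda(G)$, so the reduction of Theorem~\ref{1.18} applies verbatim; the point is that all the functors involved are equivalences of categories and hence preserve dimensions of Hom-spaces, and the Mackey/Frobenius reciprocity steps in the semisimple reduction are multiplicative, so it suffices to establish $\dim\mathrm{Hom}=1$ in the simple-type case and then in the Iwahori case.

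First I would handle the simple-type case. By the chain of isomorphisms displayed in the proof of Theorem~\ref{1.18}, one has
$$\mathrm{Hom}_K(\sigma_{min}(\lambda),\pi|K) \simeq \mathrm{Hom}_{K_L}(\mathrm{St}, Q(\Delta'_1,\ldots,\Delta'_s)|K_L),$$
where on the right $Q(\Delta'_1,\ldots,\Delta'_s)$ is an irreducible generic representation of $G_L=GL_e(L)$ (genericity is preserved because $H$ preserves the relative position of segments, by Lemma~\ref{1.22}, hence preserves the ``no two segments linked'' condition). So everything reduces to: \emph{for an irreducible generic representation $\pi'$ of $GL_e(L)$ with Iwahori-fixed vectors, the inflation of the Steinberg representation of $GL_e(k_L)$ appears in $\pi'|K_L$ with multiplicity exactly one.} This is precisely the Iwahori-case statement, and it is what is proven in \cite[Section 7.2]{MR1915088} (one checks there that the cited result gives multiplicity one, not merely non-vanishing). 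Then passing to the general semisimple type, the computation in part~2 of the proof of Theorem~\ref{1.18} gives
$$\dim\mathrm{Hom}_K(\sigma,\pi) = \prod_{i=1}^s \dim\mathrm{Hom}_{K_i}(\sigma_i,\pi_i|K_i),$$
and each $\pi_i$ is irreducible generic (since $\pi$ generic forces each $\pi_i$ generic, the supercuspidal supports being pairwise disjoint), so by the simple-type case each factor equals $1$ and the product is $1$.

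The only genuine subtlety, and where I would be most careful, is making sure the cited ingredients actually give \emph{multiplicity one} and not just non-vanishing: we need that $H$, $H_K$, and the functor $"\kappa_{max}"$ are equivalences of categories (already established), that Lemma~\ref{1.23} identifies $H_K(\sigma_{min}(\lambda))$ with $\mathrm{St}$ \emph{as a representation, not merely up to semisimplification} (it does, since $\sigma_{min}(\lambda)$ is irreducible and $H_K^{-1}$ is an equivalence), and that the Iwahori-case input from \cite{MR1915088} is stated with an equality to $1$. Granting these, the argument is essentially bookkeeping: combine the displayed isomorphism, the Iwahori multiplicity-one fact, and the multiplicativity of the semisimple reduction. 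I would therefore write the proof as: ``Since $\pi$ is generic, Theorem~\ref{1.18} and its proof give the displayed chain of isomorphisms; it remains to observe that each Hom-space has dimension one. In the Iwahori case this is \cite[Section 7.2]{MR1915088}; the simple-type case follows since the functors are equivalences; the general case follows by multiplicativity of the decomposition $\dim\mathrm{Hom}_K(\sigma,\pi)=\prod_i\dim\mathrm{Hom}_{K_i}(\sigma_i,\pi_i|K_i)$ established in the proof of Theorem~\ref{1.18}.''
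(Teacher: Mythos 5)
Your argument takes a genuinely different route from the paper's. The paper's proof of Lemma~\ref{4.34} is an abstract Bernstein-centre argument: it shows $\mathrm{Hom}_K(\sigma_{min}(\lambda),\pi)$ is a quotient of $\kappa(x)=\mathfrak{Z}_\Omega/\mathfrak{m}_x$ (a one-dimensional space), by observing that the nonzero map $\cI_K^G\sigma_{min}(\lambda)\to\pi$ factors through $\cI_K^G\sigma_{min}(\lambda)\otimes_{\mathfrak{Z}_\Omega}\kappa(x)$, applying the exact functor $\mathrm{Hom}_K(\sigma_{min}(\lambda),\cdot)$, and then identifying $\mathrm{Hom}_G(\cI_K^G\sigma_{min}(\lambda),\cI_K^G\sigma_{min}(\lambda))$ with $\mathfrak{Z}_\Omega$ via \cite[Cor.\ 7.2]{Pyv1} (which uses precisely the multiplicity-one of $\sigma_{min}(\lambda)$ inside $\mathrm{Ind}_J^K\lambda$), together with the flat base change \cite[Lem.\ 5.2]{Pyv1}. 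Combined with the non-vanishing from Theorem~\ref{1.18}, this forces the dimension to equal one. You instead propagate an exact multiplicity through the functorial reductions of the proof of Theorem~\ref{1.18}, which is a sound strategy: the functors $H$, $H_K$ are equivalences, so Hom-dimensions are preserved; and the semisimple-type step is indeed multiplicative since $\mathrm{Hom}_{\prod K_i}(\otimes\sigma_i,\otimes\pi_i|K_i)\simeq\bigotimes_i\mathrm{Hom}_{K_i}(\sigma_i,\pi_i|K_i)$ (finite-dimensionality of each factor makes the tensor-product identity legitimate).

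The one genuine risk you correctly flag is the base of the reduction: you need \cite[Section 7.2]{MR1915088} to give the equality $\dim\mathrm{Hom}_{K_L}(\mathrm{St},\pi'|K_L)=1$, not merely non-vanishing. The paper itself cites that reference only for the if-and-only-if statement in Theorem~\ref{1.18} and then deliberately gives a separate, structurally different argument for Lemma~\ref{4.34}; this is a strong hint that the author did not wish to lean on the reference for more than non-vanishing. If the Iwahori-case multiplicity-one is indeed extractable from \cite{MR1915088}, your proof is complete and arguably more concrete. If it is not, your argument has a gap exactly at the point you yourself single out, and the Bernstein-centre argument is the way to close it. In short: different route, plausible, but with an unverified dependency the paper's own proof was designed to avoid.
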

\begin{proof} Let $x:=\mathfrak{m}_x \in \Spm \mathfrak{Z}_{\Omega}$ the maximal ideal defined by $\pi$ and $\kappa(x):= \mathfrak{Z}_{\Omega}/\mathfrak{m}_x$. Since $\pi$ is generic we have that $\mathrm{Hom}_K(\sigma_{min}(\lambda),\pi)\neq 0$ by Theorem \ref{1.18}. It follows that we have  $\cI_K^{G} \sigma_{min}(\lambda) \otimes_{\mathfrak{Z}_{\Omega}}\kappa(x) \twoheadrightarrow \pi$. Since the functor $\mathrm{Hom}_K(\sigma_{min}(\lambda),.)$ is exact, we have $\mathrm{Hom}_K(\sigma_{min}(\lambda),\cI_K^{G} \sigma_{min}(\lambda) \otimes_{\mathfrak{Z}_{\Omega}}\kappa(x)) \twoheadrightarrow \mathrm{Hom}_K(\sigma_{min}(\lambda),\pi)$. Moreover by Frobenius reciprocity we have that 
	$$\mathrm{Hom}_K(\sigma_{min}(\lambda),\cI_K^{G} \sigma_{min}(\lambda) \otimes_{\mathfrak{Z}_{\Omega}}\kappa(x))$$
	$$=\mathrm{Hom}_G(\cI_K^{G}\sigma_{min}(\lambda),\cI_K^{G} \sigma_{min}(\lambda) \otimes_{\mathfrak{Z}_{\Omega}}\kappa(x))$$ 
	\noindent and  by  \cite[Lemma 5.2]{Pyv1}:
	\[\mathrm{Hom}_G(\cI_K^{G} \sigma_{min}(\lambda),\cI_K^{G} \sigma_{min}(\lambda) \otimes_{\mathfrak{Z}_{\Omega}}\kappa(x)) \]
	\[\simeq \mathrm{Hom}_G(\cI_K^{G} \sigma_{min}(\lambda),\cI_K^{G} \sigma_{min}(\lambda)) \otimes_{\mathfrak{Z}_{\Omega}}\kappa(x)\]
	Since $\sigma_{min}(\lambda)$ occurs with multiplicity one in $\mathrm{Ind}_J^K \lambda$, then  by  \cite[Corollary 7.2]{Pyv1}, we have \[ \mathfrak{Z}_{\Omega} \simeq \mathrm{Hom}_G(\cI_K^{G} \sigma_{min}(\lambda),\cI_K^{G} \sigma_{min}(\lambda)).\] It follows that \[ \mathrm{Hom}_K(\sigma_{min}(\lambda),\cI_K^{G} \sigma_{min}(\lambda) \otimes_{\mathfrak{Z}_{\Omega}}\kappa(x)) \simeq \kappa(x).\]Hence we have a surjective map of $\kappa(x)$-vector spaces:	\[\kappa(x)\twoheadrightarrow \mathrm{Hom}_K(\sigma_{min}(\lambda),\pi)\]
	Then $1\geq \dim \mathrm{Hom}_K(\sigma_{min}(\lambda),\pi)$ and this space is non-zero, hence it must be one-dimensional.
\end{proof}

%The results of this paper are valid with $E=\overline{\Q}_p$. However by the similar technique as used in Proposition 3.23 \cite{MR3529394} the results of this paper holds with $E$ a finite extension of $\Qp$, assuming that the representation $\pi$ is absolutely irreducible. In the future work the results of this paper will be used in the case when $E$ is a finite extension of $\Qp$.

\subsection*{Acknowledgments}  The results of this paper are a part of the author's PhD thesis. The author is tremendously grateful to his advisor Vytautas Pa\v{s}k\={u}nas for sharing his ideas with the author and for many helpful discussions. We would like to thank Peter Schneider for pointing out the reference \cite{MR1915088}, which allowed to simplify some of our arguments. The author would also like to thank the referee for useful comments and corrections, which improved considerably the exposition of this paper. This work was supported by SFB/TR 45 of the DFG.

\bibliographystyle{alpha}
\addcontentsline{toc}{section}{References}
\bibliography{Monodromy}
\nocite{*}

\noindent Morningside Center of Mathematics, No.55 Zhongguancun Donglu, Academy of Mathematics and Systems Science, Beijing , Haidian District, 100190 China
\\
\textit{E-mail address}: pyvovarov@amss.ac.cn

\end{document}